%
%
%
\documentclass[a4paper,12pt]{article}
\usepackage[utf8]{inputenc}
\usepackage[colorlinks=true, allcolors=blue]{hyperref}
\usepackage{enumitem}
\usepackage{lslformat, lslmath, lslthm, lslname, lslabbrv, lslhat}
\usepackage{mathtools}
\usepackage[lmargin=25mm, rmargin=25mm, top=25mm, bottom=30mm, paperwidth=210mm, paperheight=11in]{geometry} 
\usepackage{natbib}

\numberwithin{equation}{section}
\setlist[enumerate]{label=(\roman*)}

\newcommand{\senorm}[1]{\norm{#1}_{\psi_1}}
\newcommand{\sgnorm}[1]{\norm{#1}_{\psi_2}}

\newcommand{\sigsp}{\sig_{\rm SP}}
\newcommand{\sigspU}{\sig_{{\rm SP}+}}
\newcommand{\sigspL}{\sig_{{\rm SP}-}}
\newcommand{\sigspUL}{\sig_{{\rm SP}\pm}}

\newcommand{\sigsg}{\sig_{\rm SG}}
\newcommand{\sigsgU}{\sig_{{\rm SG}+}}
\newcommand{\sigsgL}{\sig_{{\rm SG}-}}
\newcommand{\sigsgUL}{\sig_{{\rm SG}\pm}}

\newcommand{\ala}{\abs{\la}}

\newcommand{\updated}[1]{{#1}}

\begin{document}

\author{Lasse \Leskela \and Ian \Valimaa}
\date{16 July 2026}
\title{Sub-Poisson distributions: Concentration inequalities, optimal variance proxies, and closure properties\thanks{This version of the article has been accepted for publication, after peer review but is
not the Version of Record and does not reflect post-acceptance improvements, or any corrections. The Version of Record
is published in Sankhya A
and is available online at: \url{http://dx.doi.org/10.1007/s13171-026-00444-x}.}}
\maketitle

\abstract{We introduce a nonasymptotic framework for sub-Poisson distributions with moment generating function dominated by that of a Poisson distribution. At its core is a new notion of optimal sub-Poisson variance proxy, analogous to the variance parameter in the sub-Gaussian setting. This framework allows us to derive a Bennett-type concentration inequality without boundedness assumptions and to show that the sub-Poisson property is closed under key operations including independent sums and convex combinations, but not under all linear operations such as scalar multiplication. We derive bounds relating the sub-Poisson variance proxy to sub-Gaussian and sub-exponential Orlicz norms. Taken together, these results unify the treatment of Bernoulli and Poisson random variables and their signed versions in their natural tail regime.}

{\bf Keywords:} Bennett inequality, Bernstein inequality, Orlicz norm, sub-Poisson tail, concentration of mass

\section{Introduction}

Understanding and quantifying the fluctuations of random variables is a central theme in modern statistics and machine learning.
Classical tools rely heavily on Gaussian-like assumptions:
centered random variables whose moment generating function is bounded by that of a Gaussian,
so that a variance-like parameter captures tail decay.
Such \emph{sub-Gaussian} variables underpin a wide range of concentration inequalities,
which are key in analysing sums of independent variables, regression estimators,
and high-dimensional statistical procedures \citep{Boucheron_Lugosi_Massart_2013,Buldygin_Kozachenko_1980,Vershynin_2018,Wainwright_2019}.

For sparse, discrete, or Poisson-type data---such as counts, Bernoulli trials, and network edges---sub-Gaussian bounds
may be overly conservative.
The \emph{sub-Poisson} framework provides a natural extension of the sub-Gaussian paradigm to variables whose tails resemble those of a Poisson distribution, providing a variance-proxy interpretation that better reflects the behaviour of such data.
The concept of sub-Poisson tails has---mainly implicitly---appeared earlier in the context of
Bennett-type concentration inequalities for sums of random variables that are either fully bounded
\citep{Bennett_1962,Vershynin_2018,Zheng_2018} or bounded from above \updated{almost surely}
\citep{Barlow_Sherlock_2025,BenHamou_Boucheron_Ohannessian_2017,Bentkus_2002,Boucheron_Lugosi_Massart_2013,Pinelis_2014,Wellner_2017}.

Our contribution is to develop a systematic framework and unified taxonomy for sub-Poisson distributions,
introducing optimal variance proxies that quantify tail behaviour in analogy with the sub-Gaussian case.
In particular, we derive a sharp, nonasymptotic Bennett-type concentration inequality that holds without any boundedness assumptions.  In addition, we provide explicit bounds linking the sub-Poisson variance proxy to classical sub-Gaussian and sub-exponential Orlicz norms, and give a detailed characterisation of closure properties under standard transformations. The framework unifies the treatment of Bernoulli, Poisson, and signed discrete distributions and is particularly useful in statistical estimation, network analysis, and risk assessment, where Poisson-like variability arises naturally \citep{Avrachenkov_Dreveton_Leskela_2025+,Barlow_Sherlock_2025,Matias_Rebafka_Villers_2018,Sawarni_Pal_Barman_2023,Valimaa_Leskela_2025}, including settings with signed count data \citep{Karlis_Ntzoufras_2006,Lishamol_Veena_2022,Rave_Kauermann_2024}. In this way, the familiar variance-based theory of Gaussian variables extends naturally to discrete and Poisson-type phenomena, preserving interpretability while capturing realistic tail behaviour.

The rest of the paper is organised as follows. Section~\ref{sec:Definitions} introduces sub-Poisson distributions and their terminology. Section~\ref{sec:ConcentrationInequalities} presents the main concentration inequalities, while Section~\ref{sec:OptimalVarianceProxies} develops the optimal sub-Poisson variance proxy. Section~\ref{sec:ClosureProperties} examines closure properties under standard operations, and Section~\ref{sec:RelatedClasses} relates sub-Poisson variables to bounded, sub-Gaussian, and sub-exponential classes. Finally, Section~\ref{sec:Examples} provides concrete examples and computes their optimal variance proxies.  Some technical results needed in the proofs are postponed into Appendix~\ref{sec:AuxiliaryResults}.

\section{Sub-Poisson distributions}
\label{sec:Definitions}

All random variables are real-valued unless otherwise indicated.
A random variable $X$ is called \emph{integrable} if $\E \abs{X} < \infty$,
and \emph{square-integrable} if $\E X^2 < \infty$.
An integrable random variable is called \emph{centered} if $\E X = 0$.
The moment generating function of $X$ is denoted by $M_X(\lambda) = \E e^{\lambda X}$,
defined as an extended real number in $[0,\infty]$ for all $\lambda \in \R$. We write
\begin{equation}
\label{eq:PoissonLMGF}
 \phi(\lambda)
 \weq e^\lambda - 1 - \lambda
\end{equation}
for the logarithmic moment generating function of a centered Poisson variable with unit variance.
A random variable $X$, \updated{or its law}, is called
\begin{itemize}
\item \emph{upper sub-Poisson} if there exists $\sigma^2 \ge 0$ such that
\begin{equation}
\label{eq:SPU}
\E e^{\lambda (X - \E X)} \le e^{\sigma^2 \phi(\lambda)}
\qquad \text{for all $\la \ge 0$},
\end{equation}
\item \emph{lower sub-Poisson} if $-X$ is upper sub-Poisson,
\item \emph{sub-Poisson} if it is both upper and lower sub-Poisson,
or equivalently
\begin{equation}
 \label{eq:SP}
 \E e^{\la (X - \E X)}
 \wle e^{\sig^2 \phi(\ala)}
 \qquad \text{for all $\la \in \R$}.
\end{equation}
\end{itemize}

These definitions mirror their classical sub-Gaussian counterparts, obtained 
by replacing the function $\phi(\la)$ with $\frac12 \la^2$ in \eqref{eq:SPU}--\eqref{eq:SP}.
They also admit a natural interpretation
via stochastic order theory \citep{Leskela_2010,Leskela_Ngo_2017,Muller_Stoyan_2002,Shaked_Shanthikumar_2007}.
For random variables
we denote $A \lemgf B$ and 
say that $A$ is less than $B$ in the
\emph{moment generating function order}
if 
$\E e^{\la A} \le \E e^{\la B}$ for all $\la \ge 0$.
Denote $\tilde X = X - \E X$
and let $\tilde Y = Y - \E Y$ where $Y$ is Poisson distributed with variance $\sig^2$.
Then $X$ is
\begin{itemize}
\item upper sub-Poisson if and only if $\tilde X \lemgf \tilde Y$,
\item lower sub-Poisson if and only if $-\tilde X \lemgf \tilde Y$.
\end{itemize}
Therefore, the upper (resp.\ lower) sub-Poisson property means that
the upper (resp.\ lower) tail of $X-\E X$
is dominated in the $\lemgf$-order by
the upper tail of $Y - \E Y$.
In particular, the random variable $X$ is sub-Poisson with variance proxy $\sig^2$
if and only if both tails of $X-\E X$
are dominated by the upper tail of the centered Poisson random variable $Y - \E Y$
with variance $\sig^2$.

\begin{remark}
An alternative definition,
appearing in \cite{BenHamou_Boucheron_Ohannessian_2017} corresponds to
replacing $\phi(\ala)$ by $\phi(\la)$ on the right side of \eqref{eq:SP}.
This amounts to comparing the lower tail of a random variable $X$ to a lower instead of an upper Poisson tail,
and implicitly requires $X$ to be almost surely bounded from below.
In contrast, our symmetric formulation compares both tails to upper Poisson tails,
which aligns more naturally with Bennett- and Bernstein-type bounds,
parallels the established framework for sub-Gaussian and sub-exponential variables,
and guarantees that all sub-Gaussian random variables are sub-Poisson.
\end{remark}


\section{Concentration inequalities}
\label{sec:ConcentrationInequalities}

The following result summarises three concentration inequalities
for upper sub-Poisson random variables.  
The first \eqref{eq:Bennett} is a Bennett-type inequality,
and the latter \eqref{eq:Bernstein1}--\eqref{eq:Bernstein2} correspond to Bernstein's inequalities.
Unlike in Bennett's original inequality \citep{Bennett_1962,Vershynin_2018,Zheng_2018}
for bounded random variables, and its various extensions for random variables that are
bounded above almost surely \citep{Barlow_Sherlock_2025,BenHamou_Boucheron_Ohannessian_2017,Bentkus_2002,Boucheron_Lugosi_Massart_2013,Pinelis_2014}, the following inequalities are valid without any boundedness assumptions.

When both tails of $X$ are sub-Poisson, i.e.\ $X$ is sub-Poisson with variance proxy $\sig^2$,
we obtain two-sided bounds
by noting that
\[
 \pr( \abs{\updated{X  - \E X}} \ge t )
 \wle \pr( \updated{X  - \E X} \ge t ) + \pr( -(\updated{X - \E X}) \ge t)
\]
and applying Proposition~\ref{the:Chernoff} to $X$ and $-X$.
When combined with Proposition~\ref{the:IndependentSum},
we obtain bounds for independent sums of sub-Poisson variables.

\begin{proposition}
\label{the:Chernoff}
If $X$ is upper sub-Poisson with variance proxy \updated{$\sig^2 > 0$},
then for all $t \ge 0$,
\begin{align}
 \pr( \updated{X - \E X} \ge t)
 &\wle e^{-\sig^2}\left(\frac{e\sig^2}{\sig^2 + t}\right)^{\sig^2 + t}\label{eq:Bennett} \\
 &\wle \exp\left(-\frac{t^2/2}{\sig^2+t/3}\right) \label{eq:Bernstein1}\\
 &\wle \exp\left(-\left(\frac{t^2}{4\sig^2}\wedge\frac{3t}{4}\right)\right). \label{eq:Bernstein2}
\end{align}
\end{proposition}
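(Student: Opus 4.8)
The plan is to derive all three inequalities from a single Chernoff argument: \eqref{eq:Bennett} comes from optimising the exponential Markov bound against the sub-Poisson hypothesis, and \eqref{eq:Bernstein1}--\eqref{eq:Bernstein2} then follow as purely analytic weakenings of \eqref{eq:Bennett}. Since \eqref{eq:SPU} constrains only the centered variable $X - \E X$, I read the left-hand side as the upper deviation probability (which coincides with $\pr(X \ge t)$ when $X$ is centered). For every $\la \ge 0$, the exponential Markov inequality together with \eqref{eq:SPU} gives
\begin{equation*}
 \pr(X - \E X \ge t) \wle e^{-\la t}\, \E e^{\la (X - \E X)} \wle \exp\bigl( \sig^2 \phi(\la) - \la t \bigr).
\end{equation*}
Optimising the exponent over $\la \ge 0$ is a one-dimensional convex minimisation: as $\phi'(\la) = e^\la - 1$, the stationarity equation $\sig^2(e^\la - 1) = t$ has the nonnegative root $\la^\ast = \ln(1 + t/\sig^2)$. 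Substituting $\la^\ast$ gives the exponent $t - (\sig^2 + t)\ln(1 + t/\sig^2)$, and a one-line rearrangement identifies this with $-\sig^2 + (\sig^2 + t)\ln\bigl(e\sig^2/(\sig^2+t)\bigr)$, the logarithm of the right-hand side of \eqref{eq:Bennett}. The degenerate case $\sig^2 = 0$, where \eqref{eq:SPU} forces $X$ to be almost surely constant, I would handle by continuity as $\sig^2 \downarrow 0$.

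For \eqref{eq:Bernstein1} I would pass to the scale-free variable $u = t/\sig^2$ and rewrite the Bennett exponent as $-\sig^2 h(u)$ with $h(u) = (1+u)\ln(1+u) - u$. The inequality then reduces to the scalar estimate
\begin{equation*}
 h(u) \ge \frac{u^2/2}{1 + u/3} \qquad (u \ge 0),
\end{equation*}
since multiplying by $\sig^2$ and restoring $u = t/\sig^2$ turns the right-hand side into $\tfrac{t^2/2}{\sig^2 + t/3}$. This scalar bound is the one genuinely analytic step and I expect it to be the main obstacle, though it is classical. The cleanest route is to clear the denominator and show that $q(u) := (1 + u/3)\,h(u) - u^2/2$ is nonnegative on $[0,\infty)$: one checks $q(0) = q'(0) = 0$ and $q''(u) = \tfrac23\bigl(\ln(1+u) - \tfrac{u}{1+u}\bigr) \ge 0$, the latter because the map $x \mapsto \ln x - 1 + x^{-1}$ (with $x = 1+u$) is nonnegative for $x \ge 1$. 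I would likely isolate this estimate as an auxiliary lemma in Appendix~\ref{sec:AuxiliaryResults}.

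Finally, for \eqref{eq:Bernstein2} I would bound the denominator $\sig^2 + t/3$ against its two competing terms, splitting on the sign of $\sig^2 - t/3$. If $t \le 3\sig^2$ then $\sig^2 + t/3 \le 2\sig^2$, whence $\tfrac{t^2/2}{\sig^2 + t/3} \ge \tfrac{t^2}{4\sig^2}$; if $t > 3\sig^2$ then $\sig^2 + t/3 < 2t/3$, whence $\tfrac{t^2/2}{\sig^2 + t/3} > \tfrac{3t}{4}$. In both regimes the Bernstein exponent from \eqref{eq:Bernstein1} dominates $\tfrac{t^2}{4\sig^2} \wedge \tfrac{3t}{4}$, and exponentiating yields \eqref{eq:Bernstein2}. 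This last step is elementary and uses only the two-case estimate on the denominator.
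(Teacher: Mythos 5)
Your proposal is correct, and for the middle inequality it takes a genuinely different route from the paper. The Chernoff step for \eqref{eq:Bennett} and the two-case denominator estimate for \eqref{eq:Bernstein2} coincide with the paper's argument (the paper phrases the latter as $\sig^2+t/3 \le (2\sig^2)\vee(2t/3)$, which is your case split in one line). The difference is in \eqref{eq:Bernstein1}: you work downstream of Bennett, writing its exponent as $-\sig^2 h(t/\sig^2)$ with $h(u)=(1+u)\log(1+u)-u$ and proving the classical scalar comparison $h(u) \ge \frac{u^2/2}{1+u/3}$ by showing $q(u)=(1+u/3)h(u)-u^2/2$ satisfies $q(0)=q'(0)=0$ and $q''(u)=\frac23\bigl(\log(1+u)-\frac{u}{1+u}\bigr)\ge 0$ (your computation of $q''$ is correct). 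The paper instead works upstream of Bennett: it bounds the Poisson log-MGF by a rational function, $\phi(\la) \le \frac{\la^2/2}{1-\la/3}$ for $0\le\la<3$, via the factorial estimate $k!\ge 2\cdot 3^{k-2}$, and then re-minimises the resulting Chernoff exponent $f(\la)=-\la t+\frac{\sig^2\la^2/2}{1-\la/3}$ explicitly (after a substitution $\al=1-\la/3$), bounding the minimum from above. Both arguments establish exactly the chain of inequalities claimed, i.e.\ that the Bennett bound is dominated by the Bernstein bound. Your route is the standard ``Bennett implies Bernstein'' path and is shorter, requiring a single self-contained convexity lemma about $h$; the paper's route never needs the Legendre transform $h$ at all and stays at the level of MGF domination, at the cost of an explicit re-optimisation. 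Two minor points in your favour: you correctly flag that the statement implicitly concerns $\pr(X-\E X\ge t)$ (the paper's proof silently writes $\E e^{\la X}$, i.e.\ treats $X$ as centered), and you handle the degenerate case $\sig^2=0$, where $\la^\ast=\log(1+t/\sig^2)$ and the substitution $u=t/\sig^2$ are undefined, by a limiting argument that the paper omits.
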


\updated{
\begin{remark}\label{rem:ChernoffConstantCase}
In the case $\sig^2=0$, Proposition~\ref{the:Variance} implies that $\Var(X)=0$ and hence $X=\E X$ almost surely.
\end{remark}}

\begin{proof}[Proof of Proposition~\ref{the:Chernoff}]
Fix a number \updated{$\la \ge 0$}.  Markov's inequality implies that
\begin{align*}
 \pr(\updated{X - \E X}\ge t)
 \wle \pr(e^{\la (\updated{X - \E X})} \ge e^{\la t})
 &\wle e^{-\la t}\E e^{\la (\updated{X - \E X})} \\
 &\wle \exp\left( -\la t+\sig^2(e^{\la} - 1-\la) \right).
\end{align*}
Differentiation shows that the right side above is minimised with $\la=\log(1+t/\sig^2)\ge 0$. 
Inequality \eqref{eq:Bennett} follows by substituting this value to the above inequality, and simplifying the outcome.

Observe next that $k! \ge 2 \cdot 3^{k-2}$ for all $k \ge 2$.
This implies that
\[
 e^{\la} - 1-\la
 \weq \sum_{k=2}^{\infty}\frac{\la^k}{k!}
 \wle \sum_{k=2}^{\infty}\frac{\la^k}{2\cdot 3^{k-2}}
 \weq \frac{\la^2/2}{1-\la/3}
 \qquad \text{for $0 \le \la < 3$}.
\]
It follows that
\begin{align*}
 e^{-\sig^2} \left(\frac{e\sig^2}{\sig^2 + t}\right)^{\sig^2 + t} 
 \nhquad = \ \exp\left(\inf_{\la\ge 0} \left(-\la t+\sig^2(e^{\la} - 1-\la)\right)\right)
 \wle \exp\left(\inf_{\updated{0 \leq \la < 3}} f(\la) \right),
\end{align*}
where $f(\la) = -\la t+\frac{\sig^2\la^2/2}{1-\la/3}$.
Let us reparameterise this function as $f(\la) = 9 \sig^2 g(\al)$,
where $\al = 1-\la/3 \in \updated{(0,1]}$ and $g(\al) = \frac{(1-\al)^2}{2\al} - \frac{t}{3\sig^2} (1-\al)$.
Denoting $z = \frac{t}{3\sig^2}\ge 0$, we see that
\[
 g(\al)
 \weq \left(\frac12 +z\right)\al + \frac12 \al^{-1} - 1 - z.
\]
Differentiation shows that $g$ is strictly convex on \updated{$(0,1]$} and attains its minimum at $\al_* = (1+2z)^{-1/2}$.
By substituting this value into the above equality, we find that
\[
g(\al_*)
\weq (1+2z)^{1/2} - (1 + z)
\weq \frac{ (1+2z)-(1+z)^2}{(1+2z)^{1/2}+1+z}
\weq -\frac{z^2}{(1+2z)^{1/2}+1+z}.
\]
By estimating now the concave function $(1+2z)^{1/2}$ from above by its linear approximation $1+z$, we find that
\begin{align*}
 g( \alpha_* )
 \wle - \frac{z^2}{2+2z}
 \weq - \frac{t^2/ (9 \sig^4)}{2 + 2 t/(3 \sig^2)}
 \weq - \frac{t^2/ (9 \sig^2)}{2 \sig^2 + (2/3)t}.
\end{align*}
Then
\[
 \inf_{\updated{0 \le \la < 3}} f(\la)
 \wle 9 \sig^2 g(\al_*)
 \wle - \frac{t^2}{2 \sig^2 + (2/3)t}
 \wle - \frac{t^2/2}{\sig^2 + t/3},
\]
from which we conclude that \eqref{eq:Bernstein1} is valid.

Inequality \eqref{eq:Bernstein2} simply follows from
\[
 \exp\left(-\frac{t^2/2}{\sig^2+t/3}\right)
 \wle \exp\left(-\frac{t^2/2}{(2\sig^2)\vee (2t/3)}\right)
 \weq \exp\left(-\left(\frac{t^2}{4\sig^2}\wedge\frac{3t}{4}\right)\right).
\]
\end{proof}

\section{Optimal variance proxies}
\label{sec:OptimalVarianceProxies}

For any random variable $X$ with a finite mean,
the \emph{optimal sub-Poisson variance proxy}
is defined by
\begin{equation}
 \label{eq:ProxySP}
 \sigsp^2(X)
 \weq \sup_{\la \ne 0} \frac{\log \E e^{\la (X - \E X)} }{\phi(\ala)},
\end{equation}
where $\phi$ is given by \eqref{eq:PoissonLMGF}.
Similarly, the \emph{optimal upper and lower sub-Poisson variance proxies} 
are defined by setting
\begin{equation}
 \label{eq:ProxySPU}
 \sigspU^2(X)
 \weq \sup_{\la > 0} \frac{\log \E e^{\la (X - \E X)} }{\phi(\la)}
\end{equation}
and $\sigspL^2(X) = \sigspU^2(-X)$.

\begin{proposition}
\label{the:ProxySP}
An integrable random variable $X$ is sub-Poisson
(resp.\ upper sub-Poisson)
if and only if
$\sigsp^2(X)$
(resp.\ $\sigspU^2(X)$) is finite,
and in this case $\sig^2 = \sigsp^2(X)$ 
(resp.\ $\sigspU^2(X)$)
is the smallest
number for which \eqref{eq:SP}
(resp.\ \eqref{eq:SPU}) 
holds.
\end{proposition}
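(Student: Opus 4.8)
The plan is to reduce the entire statement to the upper sub-Poisson case and then recover the two-sided case by a symmetry argument. Before either, I would record one elementary fact that makes all the ratios well behaved: since $e^{\la(X - \E X)} > 0$ almost surely, the expectation $\E e^{\la(X - \E X)}$ lies in $(0,\infty]$, and by Jensen's inequality it is at least $e^{\la\,\E(X - \E X)} = 1$. Hence $\log \E e^{\la(X-\E X)} \in [0,\infty]$ for every $\la$, so the quotients in \eqref{eq:ProxySP}--\eqref{eq:ProxySPU} are well defined as elements of $[0,\infty]$, and the suprema are taken in $[0,\infty]$ (with value $+\infty$ when the family is unbounded).

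For the upper case I would argue as follows. Because $\phi(0) = 0$, the inequality in \eqref{eq:SPU} holds automatically at $\la = 0$, so \eqref{eq:SPU} is equivalent to the same family of inequalities restricted to $\la > 0$. For such $\la$ we have $\phi(\la) > 0$, and taking logarithms turns \eqref{eq:SPU} into
\[
 \log \E e^{\la(X - \E X)} \wle \sig^2 \phi(\la) \qquad \text{for all } \la > 0,
\]
which, after dividing by $\phi(\la)$ and taking the supremum over $\la > 0$, is equivalent to $\sig^2 \ge \sigspU^2(X)$ by the definition \eqref{eq:ProxySPU}. Consequently a finite admissible $\sig^2$ exists if and only if $\sigspU^2(X) < \infty$; in that case the set of admissible proxies is exactly $[\sigspU^2(X), \infty)$ (note $\sigspU^2(X) \ge 0$ by the Jensen bound, so it meets the requirement $\sig^2 \ge 0$), whose minimum is $\sigspU^2(X)$ itself. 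This settles the upper sub-Poisson half.

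Finally I would handle the two-sided case by splitting the supremum defining $\sigsp^2(X)$ into its $\la > 0$ and $\la < 0$ parts. The positive part is $\sigspU^2(X)$ by definition. For the negative part I substitute $\la \mapsto -\la$ and use $\phi(\ala) = \phi(-\la)$ together with $(-X) - \E(-X) = -(X - \E X)$ to identify it with $\sigspU^2(-X) = \sigspL^2(X)$, giving $\sigsp^2(X) = \sigspU^2(X) \vee \sigspL^2(X)$. By the same change of variables, \eqref{eq:SP} with proxy $\sig^2$ is equivalent to $X$ \emph{and} $-X$ both satisfying \eqref{eq:SPU} with that common $\sig^2$. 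Applying the upper case to $X$ and to $-X$ then yields that \eqref{eq:SP} holds for some finite $\sig^2$ iff both $\sigspU^2(X)$ and $\sigspL^2(X)$ are finite, iff $\sigsp^2(X) < \infty$, and the smallest admissible proxy is $\sigspU^2(X) \vee \sigspL^2(X) = \sigsp^2(X)$.

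I do not expect a serious obstacle: the argument is essentially a rearrangement of the defining inequalities into quotient form. The only points demanding care are the well-definedness of the quotients---secured by the Jensen lower bound, which prevents dividing an indeterminate logarithm by $\phi$---and the bookkeeping with $\phi(\ala)$ when translating between the one-sided and two-sided formulations.
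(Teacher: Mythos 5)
Your proposal is correct and follows essentially the same route as the paper: both rearrange the defining inequality, via logarithms and division by the strictly positive $\phi$, into the quotient form whose supremum is the optimal proxy, so that the set of admissible proxies is exactly the interval $[\sigsp^2(X),\infty)$ (resp.\ $[\sigspU^2(X),\infty)$), nonempty precisely when the proxy is finite. The only organisational difference is that the paper handles the two-sided case directly over all $\la \ne 0$ and calls the upper case analogous, whereas you do the upper case first and recover the two-sided case via $\sigsp^2(X) = \sigspU^2(X) \vee \sigspL^2(X)$; your preliminary Jensen observation that $\log \E e^{\la(X-\E X)} \ge 0$ merely makes explicit a point the paper leaves implicit.
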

\begin{proof}
We observe that a number $\sig^2 \ge 0$ satisfies \eqref{eq:SP}
if and only if 
\begin{equation}
 \label{eq:Proxy1}
 \sig^2 \ge \frac{\log \E e^{\la (X - \E X)}}{\phi(\ala)}
 \quad \text{for all $\la \ne 0$}.
\end{equation}
In light of definition \eqref{eq:ProxySP}, we see that \eqref{eq:Proxy1} is equivalent
to $\sig^2 \ge \sigsp^2(X)$.
We conclude that the set of numbers $\sig^2 \ge 0$ that satisfy \eqref{eq:SP}
is equal to $[\sigsp^2(X), \infty)$, and the latter interval is nonempty if and only if
$\sigsp^2(X) < \infty$.

Similarly, we see that
$\sig^2 \ge 0$ satisfies \eqref{eq:SPU}
if and only if \eqref{eq:Proxy1} holds for all $\la > 0$.
An analogous reasoning then confirms the claim for the upper sub-Poisson properties.
\end{proof}


The following result shows that any (upper/lower) sub-Poisson variance proxy
is always greater than the variance of the random variable.
In particular, any (upper/lower) sub-Poisson random variable
must have a finite second moment.

\begin{proposition}
\label{the:Variance}
$\Var(X) \le \sigspUL^2(X)\le \sigsp^2(X)$ for any integrable random variable $X$.
\end{proposition}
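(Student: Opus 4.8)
The plan is to treat the two inequalities separately. The bound $\sigspUL^2(X) \le \sigsp^2(X)$ I would read straight off the definitions \eqref{eq:ProxySP}--\eqref{eq:ProxySPU}: for every $\la > 0$ we have $\ala = \la$, so the ratio $\log \E e^{\la(X - \E X)}/\phi(\la)$ defining $\sigspU^2(X)$ is one of the terms in the supremum defining $\sigsp^2(X)$; since the latter ranges over the larger index set $\la \ne 0$, we get $\sigspU^2(X) \le \sigsp^2(X)$. For the lower proxy, writing $\sigspL^2(X) = \sigspU^2(-X)$ and substituting $\la \mapsto -\la$ identifies its defining ratios with the $\la < 0$ terms of \eqref{eq:ProxySP}, giving $\sigspL^2(X) \le \sigsp^2(X)$ as well. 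No analysis beyond matching index sets is needed here.

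For the inequality $\Var(X) \le \sigspUL^2(X)$, by symmetry (replacing $X$ with $-X$) it suffices to show $\Var(X) \le \sigspU^2(X)$. If $\sigspU^2(X) = \infty$ there is nothing to prove, so assume it equals a finite $\sig^2$. By Proposition~\ref{the:ProxySP} the bound $\E e^{\la \tilde X} \le e^{\sig^2 \phi(\la)}$ then holds for all $\la > 0$, where $\tilde X = X - \E X$; in particular $\E e^{\la \tilde X} < \infty$. Since $\E \tilde X = 0$ and $e^u - 1 - u \ge 0$ for all real $u$, I would write
\[
 \frac{\E e^{\la \tilde X} - 1}{\la^2}
 \weq \E \frac{e^{\la \tilde X} - 1 - \la \tilde X}{\la^2},
\]
where the integrand is nonnegative. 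Applying Fatou's lemma along a sequence $\la \downarrow 0$, whose integrand converges pointwise to $\tilde X^2/2$, yields $\liminf_{\la \downarrow 0}(\E e^{\la \tilde X} - 1)/\la^2 \ge \Var(X)/2$. On the other hand, the upper bound gives $(\E e^{\la \tilde X} - 1)/\la^2 \le (e^{\sig^2 \phi(\la)} - 1)/\la^2$, and since $\phi(\la) = \tfrac12 \la^2 + o(\la^2)$ the right side converges to $\sig^2/2$. Comparing the two estimates gives $\Var(X) \le \sig^2 = \sigspU^2(X)$.

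The main obstacle is that $X$ is only assumed integrable, so its second moment could a priori be infinite and one cannot simply differentiate $\log \E e^{\la \tilde X}$ twice at the origin. The role of Fatou's lemma, made available by the nonnegativity of $e^{\la \tilde X} - 1 - \la \tilde X$, is precisely to push the variance through the limit without any a priori control on $\E \tilde X^2$; the finiteness of $\Var(X)$ then emerges as a consequence of the finite upper bound rather than being assumed, which is exactly the claim made in the text preceding the proposition. The remaining steps---the elementary inequality $e^u \ge 1 + u$, the splitting of the expectation (legitimate because $\tilde X$ is integrable and $\E e^{\la \tilde X}$ is finite), and the Taylor behaviour $\phi(\la) \sim \tfrac12\la^2$---are routine.
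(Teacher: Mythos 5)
Your proof is correct. The first half (matching index sets to get $\sigspUL^2(X) \le \sigsp^2(X)$, and the symmetry reduction $\sigspL^2(X) = \sigspU^2(-X)$) is the same bookkeeping the paper uses implicitly via the identity $\sigsp^2(X) = \sigspU^2(X) \vee \sigspL^2(X)$. The second half, however, takes a genuinely different route. Both you and the paper face the same obstacle: $\E \tilde X^2$ is not known to be finite a priori, and the moment generating function of $\tilde X$ need not be twice differentiable at the origin, so a naive Taylor expansion of $\la \mapsto \E e^{\la \tilde X}$ is not justified. The paper resolves this by truncation: it sets $\tilde X_n = \tilde X \, 1(|\tilde X| \le n)$, uses the monotonicity of $x \mapsto e^{\la x} - 1 - \la x$ on each half-line to preserve the domination $M_{\tilde X_n}(\la) - 1 - \la \E \tilde X_n \le M_{\tilde Y}(\la) - 1 - \la \E \tilde Y$, performs a legitimate second-order expansion of the analytic function $M_{\tilde X_n}$ to get $\E \tilde X_n^2 \le \sig^2$, and finishes with monotone convergence. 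You instead keep the Taylor limit pointwise \emph{inside} the expectation --- $(e^{\la x} - 1 - \la x)/\la^2 \to x^2/2$ for each fixed $x$ --- and pass it through with Fatou's lemma, which applies because the integrand is nonnegative ($e^u \ge 1 + u$) and which is valid with values in $[0,\infty]$, so finiteness of $\Var(X)$ emerges as a conclusion rather than an assumption. Your splitting $\E[e^{\la \tilde X} - 1 - \la \tilde X] = \E e^{\la \tilde X} - 1$ is properly justified (each term is integrable, since $\E e^{\la \tilde X} \le e^{\sig^2 \phi(\la)} < \infty$ and $\E|\tilde X| < \infty$), and the comparison limit $(e^{\sig^2\phi(\la)} - 1)/\la^2 \to \sig^2/2$ is elementary. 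In effect, your single application of Fatou collapses the paper's truncation-plus-monotone-convergence machinery into one limit exchange, avoiding any appeal to analyticity of moment generating functions; the trade-off is purely stylistic, as both arguments are fully rigorous and yield the identical bound.
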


\begin{proof}
Since  $\sigspL^2(X) = \sigspU^2(-X)$
and $\sigsp^2(X) = \sigspU^2(X) \vee \sigspL^2(X)$,
we only need to verify that $\Var(X) \le \sigspU^2(X)$.
We assume that $\sig^2 = \sigspU^2(X)$ is finite, to exclude trivial cases.
Denote $\ttX = X - \E X$ and $\ttY = Y - \E Y$ where $Y$ is Poisson distributed with mean $\sig^2$.
By Proposition~\ref{the:ProxySP}, 
we see that $M_{\ttX}(\la) \le M_{\ttY}(\la)$ for all $\la \ge 0$.
Because $\E \ttX = \E \ttY = 0$, it follows that
\begin{equation}
 \label{eq:VarianceProof1}
 M_{\ttX}(\la) - 1 - \la \E \ttX
 \wle M_{\ttY}(\la) - 1 - \la \E \ttY
 \qquad \text{for all $\la \ge 0$}.
\end{equation}
If $M_{\ttX}$ were twice differentiable at 0, we could apply a second-order Taylor expansion
to conclude from \eqref{eq:VarianceProof1} that
$\Var(X) \le \Var(Y)$. However, such an expansion may not be justified, since $M_{\ttX}$
might not be differentiable in any neighbourhood of zero.
To overcome this, we approximate $\ttX$ by truncated random variables $\ttX_n = \ttX 1(\abs{\ttX} \le n)$.
Fix a number $\la \ge 0$.
Because the function $x \mapsto e^{\la x} - 1 - \la x$ is increasing on $[0,\infty)$ and decreasing on $(-\infty,0]$,
we see that 
\[
 e^{\la \ttX_n} - 1 - \la \ttX_n
 \wle e^{\la \ttX} - 1 - \la \ttX
\]
almost surely.
By taking expectations
and applying \eqref{eq:VarianceProof1},
we conclude that
\begin{equation}
 \label{eq:VarianceProof2}
 M_{\ttX_n}(\la) - 1 - \la \E \ttX_n
 \wle M_{\ttY}(\la) - 1 - \la \E \ttY
 \qquad \text{for all $\la \ge 0$}.
\end{equation}
Because $\ttX_n$ is a bounded variable, we know that $M_{\ttX_n}(\la)$ is analytic on the full real line,
with $M_{\ttX_n}''(0) = \E \ttX_n^2$. Because \updated{$M_{\ttX_n}(0) = 1$ and $M'_{\ttX_n}(0) = \E \ttX_n$},
it follows that
\[
 \lim_{\la \downto 0} \frac{M_{\ttX_n}(\la) - 1 - \la \E \ttX_n}{\la^2/2}
 \weq \E \ttX_n^2.
\]
A similar expansion holds for the centered Poisson moment generating function
$M_{\ttY}(\la) = e^{\sig^2(e^\la-1-\la)}$.
Now by 
dividing both sides of \eqref{eq:VarianceProof2} by $\la^2/2$ and taking $\la \downto 0$,
we conclude that
\[
 \E \ttX_n^2
 \wle \E \ttY^2.
\]
By noting that \updated{$\ttX_n^2 \uparrow \ttX^2$} as $n \to \infty$, it follows by the monotone convergence theorem
that
$
 \E \ttX^2 \le \E \ttY^2,
$
which means that $\Var(X) \le \Var(Y) = \sig^2$.
\end{proof}

As a corollary of Proposition~\ref{the:Variance},
we obtain the following characterisation of degenerate random variables.

\begin{proposition}
\label{the:Zero}
The following are equivalent for any integrable random variable $X$:
\[
\text{(i) } X = \E X \ \text{a.s.}, \quad
\text{(ii) } \sigsp^2(X) = 0, \quad
\text{(iii) } \sigspU^2(X) = 0, \quad
\text{(iv) } \sigspL^2(X) = 0.
\]
\end{proposition}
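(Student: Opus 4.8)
The plan is to route everything through the variance lower bound of Proposition~\ref{the:Variance} together with the decomposition $\sigsp^2(X) = \sigspU^2(X) \vee \sigspL^2(X)$, establishing the four-way equivalence by the implications (i) $\Rightarrow$ (ii) $\Rightarrow$ (iii) $\Rightarrow$ (i) supplemented by (ii) $\Rightarrow$ (iv) $\Rightarrow$ (i). For (i) $\Rightarrow$ (ii) I would note that $X = \E X$ almost surely yields $\E e^{\la(X - \E X)} = 1$ for every $\la$, so the numerator in definition \eqref{eq:ProxySP} vanishes identically; since $\phi(\ala) > 0$ for all $\la \ne 0$, the defining supremum is a supremum of zeros and $\sigsp^2(X) = 0$.

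The implications (ii) $\Rightarrow$ (iii) and (ii) $\Rightarrow$ (iv) are then immediate, because $\sigspU^2(X)$ and $\sigspL^2(X)$ are nonnegative and their maximum equals $\sigsp^2(X) = 0$, forcing both to vanish. For the return directions I would invoke Proposition~\ref{the:Variance}: from $\sigspU^2(X) = 0$ it gives $0 \le \Var(X) \le \sigspU^2(X) = 0$, so $\Var(X) = 0$ and $X = \E X$ almost surely, which is (iii) $\Rightarrow$ (i); for (iv) $\Rightarrow$ (i) I would use $\sigspL^2(X) = \sigspU^2(-X)$ and apply the same bound to $-X$, obtaining $\Var(X) = \Var(-X) \le \sigspU^2(-X) = 0$ and the same conclusion.

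I do not anticipate a genuine obstacle, since the substantive content is already carried by Proposition~\ref{the:Variance} and the remaining steps amount to bookkeeping with the definitions and the symmetry $\Var(X) = \Var(-X)$. The only point worth flagging is that the passage from a vanishing proxy back to degeneracy must rest on that variance inequality rather than on a naive second-order Taylor expansion of the moment generating function, which is exactly the subtlety Proposition~\ref{the:Variance} was constructed to circumvent.
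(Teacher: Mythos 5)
Your proof is correct and follows essentially the same route as the paper: (i)$\implies$(ii) via the vanishing moment generating function, (ii)$\implies$(iii)\&(iv) via the decomposition $\sigsp^2 = \sigspU^2 \vee \sigspL^2$, and the return implications via the variance lower bound of Proposition~\ref{the:Variance}. Your closing remark about avoiding a naive Taylor expansion correctly identifies the subtlety, but it is already absorbed into Proposition~\ref{the:Variance}, exactly as the paper does.
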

\begin{proof}
(i)$\implies$(ii). If $X = \E X$ almost surely, then $\E e^{\la(X-\E X)} = 1$ for all $\la$,
so that $\sigsp^2(X) = 0$ due to \eqref{eq:ProxySP}.

(ii)$\implies$(iii)\&(iv). Immediate from $\sigsp^2(X) = \sigspU^2(X) \vee \sigspL^2(X)$.

(iii)$\implies$(i).  If $\sigspU^2(X) = 0$, then $\Var(X) = 0$ by Proposition~\ref{the:Variance}, so that $X = \E X$ almost surely.

(iv)$\implies$(i).  Similarly as the proof of (iii)$\implies$(i).
\end{proof}

\section{Closure properties}
\label{sec:ClosureProperties}

\subsection{Convexity}

The spaces of centered sub-Gaussian, centered sub-Poisson, and centered real-valued
random variables defined on a probability space $(\Omega, \cA, \pr)$ are ordered
(see Proposition~\ref{the:SubGaussianComparisonQualitative}) by
\[
 L^{\rm SG}_0(\pr)
 \subset L^{\rm SP}_0(\pr)
 \subset L^1_0(\pr).
\]
The space $L^1_0(\pr)$ is a Banach space equipped with the standard $L^1$-norm.
It is also known that $\sigsg(X)$ defines a norm making $L^{\rm SG}_0(\pr)$
a Banach space \cite[Theorem 1]{Buldygin_Kozachenko_1980}.
In contrast, the space of centered sub-Poisson random variables $L^{\rm SP}_0(\pr)$ does not enjoy such linear structure.
For example, it is not closed under scalar multiplication (Example~\ref{exa:ScaledSkellam}).
Nevertheless, $L^{\rm SP}_0(\pr)$ shares some weaker closure properties with the space $L^p_0(\pr)$ for $0 < p < 1$.
Namely, the following result shows that $L^{\rm SP}_0(\pr)$ is a convex and balanced\footnote{A subset $U$ of a vector space is called \emph{balanced} if $a x \in U$ for all $x \in U$ and $\abs{a} \le 1$.}
subset of $L^1_0(\pr)$.

\begin{proposition}
\label{the:Convexity}
The functional $\sigsp^2 \colon L^{\rm SP}_0(\pr) \to [0,\infty]$
is convex and balanced 
in the sense that for any $X,Y \in L^1_0(\pr)$:
\begin{enumerate}
\item[\updated{\rm (i)}] \label{the:Convexity:Convex} $\sigsp^2( (1-a) X + aY) \le (1-a) \sigsp^2(X) + a \sigsp^2(Y)$ for $0 \le a \le 1$.
\item[\updated{\rm (ii)}] \label{the:Convexity:Balanced} $\sigsp^2(a X) \le a^2 \sigsp^2(X) \le \sigsp^2(X)$ for $\abs{a} \le 1$.
\end{enumerate}
\end{proposition}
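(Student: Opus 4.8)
The plan is to work directly from definition \eqref{eq:ProxySP}, exploiting that elements of $L^1_0(\pr)$ are already centered (so that $X - \E X = X$) and that, by Jensen's inequality, $\log \E e^{\la X} \ge \la \E X = 0$ for every such $X$ and every $\la \ne 0$. This nonnegativity of the numerators appearing in \eqref{eq:ProxySP} is what will let me push estimates on the denominators through without flipping inequality signs.

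For the convexity claim \ref{the:Convexity:Convex}, I would fix $0 < a < 1$ (the endpoints being trivial) and $\la \ne 0$, write $Z = (1-a)X + aY$, and apply H\"older's inequality with conjugate exponents $p = 1/(1-a)$ and $q = 1/a$ to the factorisation $e^{\la Z} = (e^{\la X})^{1-a}(e^{\la Y})^{a}$. This yields
\[
 \E e^{\la Z} \wle \left(\E e^{\la X}\right)^{1-a}\left(\E e^{\la Y}\right)^{a},
\]
so that taking logarithms gives the pointwise bound $\log \E e^{\la Z} \le (1-a)\log \E e^{\la X} + a \log \E e^{\la Y}$. Dividing by $\phi(\ala) > 0$, bounding each of the two resulting ratios by the corresponding supremum, and then taking the supremum over $\la \ne 0$ delivers \ref{the:Convexity:Convex}; the case in which one of the proxies is infinite holds automatically.

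For the balancedness claim \ref{the:Convexity:Balanced}, the inequality $a^2 \sigsp^2(X) \le \sigsp^2(X)$ is immediate from $a^2 \le 1$ and $\sigsp^2(X) \ge 0$, so the real content is $\sigsp^2(aX) \le a^2\sigsp^2(X)$. The case $a = 0$ is trivial, so I assume $0 < \abs{a} \le 1$ and set $b = 1/\abs{a} \ge 1$. Substituting $\mu = a\la$ in \eqref{eq:ProxySP}, which is a bijection of $\R\setminus\{0\}$ onto itself, rewrites the proxy as $\sigsp^2(aX) = \sup_{\mu \ne 0} \frac{\log\E e^{\mu X}}{\phi(b\abs{\mu})}$. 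The key auxiliary fact is the superhomogeneity estimate
\[
 \phi(bs) \ge b^2 \phi(s) \qquad \text{for all } b \ge 1,\ s \ge 0,
\]
which follows termwise from the power series $\phi(s) = \sum_{k \ge 2} s^k/k!$, since $b^k \ge b^2$ whenever $k \ge 2$ and $b \ge 1$. Combining this with $\log \E e^{\mu X} \ge 0$ gives, for each $\mu \ne 0$, the bound $\frac{\log\E e^{\mu X}}{\phi(b\abs{\mu})} \le a^2\, \frac{\log\E e^{\mu X}}{\phi(\abs{\mu})} \le a^2 \sigsp^2(X)$, and taking the supremum over $\mu$ finishes the argument.

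I expect the only genuinely delicate point to be keeping the inequality directions straight when dividing by the convex weight $\phi$: the comparison $\phi(bs) \ge b^2\phi(s)$ enlarges the denominator, so the estimate proceeds in the desired direction only because each numerator $\log \E e^{\mu X}$ is guaranteed nonnegative by centering. Everything else, namely the application of H\"older, the power-series comparison, and the handling of infinite proxies and of the boundary values of $a$, is routine.
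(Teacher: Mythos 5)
Your proposal is correct and follows essentially the same route as the paper: H\"older's inequality with exponents $1/(1-a)$ and $1/a$ for convexity, and the termwise power-series comparison of $\phi$ for balancedness. Your part (ii) is only a cosmetic rearrangement of the paper's argument---you substitute $\mu = a\la$ and use $\phi(bs)\ge b^2\phi(s)$ for $b=1/\abs{a}\ge 1$ together with Jensen's nonnegativity of the numerator, whereas the paper bounds the numerator by $\sigsp^2(X)\phi(\abs{a\la})$ and uses the equivalent inequality $\phi(\abs{a}s)\le a^2\phi(s)$, so both hinge on the identical termwise estimate.
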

\begin{proof}
(i)
Let $a \in (0,1)$.
\Holder's inequality 
$\E AB \le (\E A^p)^{1/p} (\E B^q)^{1/q}$ applied
to $A = e^{\la (1-a)X }$ and $B = e^{\la a Y }$
with
$p = \frac{1}{1-a}$ and $q = \frac{1}{a}$
implies that
\begin{align*}
 \E e^{\la ( (1-a)X + a Y )}
 \weq \E e^{\la (1-a)X } e^{\la a Y}
 \wle \left( \E e^{\la X } \right)^{1-a} \left( \E e^{\la Y} \right)^{a}
\end{align*}
By recalling the fundamental characterisation of the optimal sub-Poisson variance proxy
(Proposition~\ref{the:ProxySP}), it follows that
\[
 \log \E e^{\la ( (1-a)X + a Y )}
 \wle (1-a) \sigsp^2(X) \phi(\ala) + a \sigsp^2(Y) \phi(\ala).
\]
Hence (i) follows by \eqref{eq:ProxySP}.

(ii)
Assume that $\abs{a} \le 1$.
Then
\begin{align*}
 \phi(\abs{a\la})
 \weq \sum_{k \ge 2} \frac{\abs{a}^k \ala^k}{k!}
 \wle a^2 \sum_{k \ge 2} \frac{\ala^k}{k!}
 \weq a^2 \phi(\ala),
\end{align*}
so that by \eqref{eq:ProxySP},
\[
 \sigsp^2(aX)
 \weq \sup_{\la \ne 0} \frac{\log \E e^{\la aX}}{\phi(\ala)}
 \wle \updated{\sup_{\la \ne 0}}\frac{\sigsp^2(X) \phi(\abs{a\la})}{\phi(\ala)}
 \wle a^2 \sigsp^2(X).
\]
\end{proof}

\subsection{Independent sums}

\begin{proposition}
\label{the:IndependentSum}
For any independent integrable random variables $X_1,\dots,X_n$:
\begin{align}
 \label{eq:IndependentU}
 \sigspUL^2(\sum_i X_i) &\wle \sum_i \sigspUL^2(X_i), \\
 \label{eq:Independent}
 \sigsp^2(\sum_i X_i) &\wle \sum_i \sigsp^2(X_i).
\end{align}
\end{proposition}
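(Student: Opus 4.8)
The plan is to exploit the fact that the optimal sub-Poisson variance proxy is defined as a supremum of log-MGFs divided by $\phi$, combined with the multiplicativity of MGFs under independence. First I would prove the upper version $\sigspU^2(\sum_i X_i) \le \sum_i \sigspU^2(X_i)$, from which the lower version follows by applying it to $-X_i$ (since $\sigspL^2 = \sigspU^2(-\cdot)$ and the $X_i$ are independent iff the $-X_i$ are), and then \eqref{eq:Independent} follows since $\sigsp^2 = \sigspU^2 \vee \sigspL^2$ together with subadditivity of the maximum.

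For the upper bound, let $S = \sum_i X_i$ and fix $\la > 0$. By independence and centering, $\E e^{\la(S - \E S)} = \prod_i \E e^{\la(X_i - \E X_i)}$, so taking logarithms gives $\log \E e^{\la(S-\E S)} = \sum_i \log \E e^{\la(X_i - \E X_i)}$. By the defining characterisation in \eqref{eq:ProxySPU}, each term satisfies $\log \E e^{\la(X_i - \E X_i)} \le \sigspU^2(X_i)\,\phi(\la)$ for all $\la > 0$. Summing over $i$ yields $\log \E e^{\la(S - \E S)} \le \big(\sum_i \sigspU^2(X_i)\big)\phi(\la)$, and dividing by $\phi(\la) > 0$ and taking the supremum over $\la > 0$ gives exactly $\sigspU^2(S) \le \sum_i \sigspU^2(X_i)$ by \eqref{eq:ProxySPU}.

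I expect the main technical care to be bookkeeping rather than a genuine obstacle: the cleanest route is to handle the upper and lower cases in one stroke by noting the sum of independent variables is again a sum of independent variables after negation, and then to assemble the two-sided statement \eqref{eq:Independent} from $\sigsp^2(\cdot) = \sigspU^2(\cdot) \vee \sigspL^2(\cdot)$ and the inequality $\max_i (a_i \vee b_i) \le (\sum_i a_i) \vee (\sum_i b_i) \le \sum_i (a_i \vee b_i)$. One subtlety worth flagging is the degenerate case where some $\sigspU^2(X_i) = \infty$, in which case the right-hand side is $\infty$ and the inequality is vacuous; when all proxies are finite, Proposition~\ref{the:ProxySP} guarantees the per-variable bound $\log \E e^{\la(X_i - \E X_i)} \le \sigspU^2(X_i)\phi(\la)$ holds genuinely for every $\la > 0$, so the summation step is valid without further justification.
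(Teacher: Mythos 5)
Your proof is correct and takes essentially the same route as the paper: both use the factorisation of the MGF under independence, and your per-$\la$ bound $\log \E e^{\la(X_i-\E X_i)} \le \sigspU^2(X_i)\phi(\la)$ followed by summation and taking the supremum is just the unrolled form of the paper's ``supremum of a sum is at most the sum of suprema'' step. The handling of the lower proxy by negation and the assembly of \eqref{eq:Independent} via $\sigsp^2 = \sigspU^2 \vee \sigspL^2$ also match the paper's argument.
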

\begin{proof}
Let $S=\sum_i X_i$. Then by independence, $\E e^{\la (S-\E S)} = \prod_i \E e^{\la (X_i-\E X_i)}$.
Hence \eqref{eq:IndependentU} for $\sigspU^2$ follows by noting that
\begin{align*}
 \sigspU^2(S)
 \weq \sup_{\la > 0} \sum_i \frac{\updated{\log\,} \E e^{\la (X_i-\E X_i)}}{\phi(\la)} 
 &\wle \sum_i \sup_{\la > 0} \frac{\updated{\log\,}\E e^{\la (X_i-\E X_i)}}{\phi(\la)}
 \weq \sum_i \sigspU^2(X_i).
\end{align*}

Inequality \eqref{eq:IndependentU} \updated{for} $\sigspL^2$ follows by applying 
\eqref{eq:IndependentU} to $-S = \sum_i (-X_i)$.
Inequality \eqref{eq:Independent} follows from \eqref{eq:IndependentU} by 
noting that $\sigsp^2(S) = \sigspU^2(S) \vee \sigspL^2(S)$.
\end{proof}

\subsection{Absolute values}

\begin{proposition}
\label{the:AbsoluteValue}
If $X$ is sub-Poisson,
then $\abs{X-\E X}$ is upper sub-Poisson with
optimal variance proxy 
$\sigspU^2( \abs{X-\E X} ) \le 2 \sigsp^2(X)$.
\end{proposition}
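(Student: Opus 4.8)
The plan is to bound the optimal \emph{upper} proxy of $Z = \abs{X - \E X}$ directly from the definition \eqref{eq:ProxySPU}. Writing $\tilde X = X - \E X$, $m = \E Z$, and $\sig^2 = \sigsp^2(X)$, it suffices by Proposition~\ref{the:ProxySP} to prove $\E e^{\la(Z - m)} \le e^{2\sig^2 \phi(\la)}$ for every $\la > 0$; taking logarithms, dividing by $\phi(\la)$, and taking the supremum then yields $\sigspU^2(Z) \le 2\sig^2$. The one clean input I would extract from the sub-Poisson hypothesis is a hyperbolic-cosine bound: since $X$ is sub-Poisson, Proposition~\ref{the:ProxySP} gives $\E e^{\la \tilde X} \le e^{\sig^2 \phi(\la)}$ and $\E e^{-\la \tilde X} \le e^{\sig^2 \phi(\la)}$ for $\la > 0$, so averaging them gives $\E \cosh(\la \tilde X) \le e^{\sig^2 \phi(\la)}$. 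Because $\cosh$ is even, $\cosh(\la Z) = \cosh(\la \tilde X)$ pointwise, and hence $\E \cosh(\la Z) \le P$, where $P = e^{\sig^2 \phi(\la)}$.

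The hard part is that the obvious estimate $e^{\la Z} \le e^{\la \tilde X} + e^{-\la \tilde X}$, i.e.\ $\E e^{\la Z} \le 2P$, loses a factor $2$; after recentering by $m$ this spoils the bound for small $\la$, where $\log 2 - \la m$ is not absorbed by $\sig^2 \phi(\la)$. I would sidestep this by retaining \emph{both} tails of $Z$. Setting $a = \E e^{\la(Z-m)}$ and $b = \E e^{-\la(Z-m)}$ and expanding the hyperbolic-cosine bound,
\[
 \tfrac12\left( e^{\la m} a + e^{-\la m} b \right)
 \weq \E \cosh(\la Z)
 \wle P.
\]
The saving observation is Jensen's inequality applied to the reflected tail: since $\E(Z-m) = 0$, convexity of $u \mapsto e^{-\la u}$ gives $b = \E e^{-\la(Z-m)} \ge 1$. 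Substituting this lower bound and writing $s = e^{-\la m} \in (0,1]$ (valid as $\la > 0$, $m \ge 0$) yields $e^{\la m} a \le 2P - e^{-\la m} b \le 2P - s$, that is, $a \le 2Ps - s^2$.

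It then remains to compare $2Ps - s^2$ with $P^2 = e^{2\sig^2\phi(\la)}$, and here everything collapses: $P^2 - (2Ps - s^2) = (P-s)^2 \ge 0$, so $a \le P^2$, which is exactly the desired inequality. Taking the supremum over $\la > 0$ gives $\sigspU^2(\abs{X - \E X}) \le 2\sig^2 = 2\sigsp^2(X)$, and finiteness of this proxy makes $\abs{X - \E X}$ upper sub-Poisson by Proposition~\ref{the:ProxySP}; the degenerate case $\sig^2 = 0$ (where $Z = 0$ a.s.\ by Proposition~\ref{the:Zero}) is immediate. I expect the only genuinely delicate point to be recognising that the Jensen lower bound $b \ge 1$ is precisely what turns the lossy union bound into the exact square $(P-s)^2 \ge 0$; everything else is bookkeeping.
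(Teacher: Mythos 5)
Your proof is correct, and it takes a genuinely different route from the paper's. The paper works pointwise with the function $\phi$: writing $Y = X - \E X$, it expands $\E e^{\la \abs{Y}} = 1 + \la \E\abs{Y} + \E\phi(\la\abs{Y})$, absorbs the recentering factor via $(1+\la \E\abs{Y})e^{-\la \E\abs{Y}} \le 1$, uses the pointwise bound $\phi(\abs{y}) \le \phi(y) + \phi(-y)$ to reach $\E e^{\la(\abs{Y}-\E\abs{Y})} \le -1 + 2e^{\sig^2\phi(\la)}$, and finishes with $2t - 1 \le t^2$. You instead work entirely at the level of moment generating functions: the evenness identity $\cosh(\la Z) = \cosh(\la \tilde X)$ together with averaging the two sub-Poisson bounds gives $\E\cosh(\la Z) \le P$, and your key extra ingredient, absent from the paper, is the Jensen lower bound $b = \E e^{-\la(Z-m)} \ge 1$ on the reflected tail, which upgrades the lossy estimate $e^{\la m} a \le 2P$ to $a \le 2Ps - s^2 \le P^2$ with $s = e^{-\la m}$. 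Both proofs close with the same completing-the-square device (your $(P-s)^2 \ge 0$ reduces to the paper's $(t-1)^2 \ge 0$ at $s = 1$), and both yield the constant $2$; in fact your intermediate bound is slightly sharper than the paper's, since $s \mapsto 2Ps - s^2$ is increasing on $(0,1] \subset (0,P]$ and hence $2Ps - s^2 \le 2P - 1$. What the paper's route buys is that it never needs the reflected tail of $Z$ or any lower bound on it; what yours buys is that it avoids the pointwise $\phi$-manipulations and makes transparent exactly where the slack comes from, namely the square $(P-s)^2$.
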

\begin{proof}
Denote $Y = X-\E X$.
Fix $\la \ge 0$.
Observe that
\begin{align*}
 \E e^{\la (\abs{Y} - \E \abs{Y})}
 \weq e^{- \la \E \abs{Y}} \E e^{\la \abs{Y}}
 \weq e^{- \la \E \abs{Y}} \big( 1 + \la \E \abs{Y} + \E \phi(\la \abs{Y}) \big).
\end{align*}
Because $1+t \le e^t$ for all $t$, it follows that
\begin{equation}
 \label{eq:AbsoluteValue1}
 \E e^{\la (\abs{Y} - \E \abs{Y})}
 \wle 1 + e^{- \la \E \abs{Y}} \E \phi(\la \abs{Y})
 \wle 1 + \E \phi(\la \abs{Y}).
\end{equation}
Note also that
$\phi(\abs{y}) = \phi(y) 1(y \ge 0) + \phi(-y) 1(y < 0)
\le \phi(y) + \phi(-y) = e^y + e^{-y} - 2$.
It follows that
\begin{equation}
 \label{eq:AbsoluteValue2}
 1 + \E \phi(\la \abs{Y})
 \wle -1 + M_Y(\la) + M_Y(-\la)
 \wle -1 + 2 e^{\sigsp^2(X) \phi(\la)}.
\end{equation}
The claim follows by combining \eqref{eq:AbsoluteValue1}--\eqref{eq:AbsoluteValue2}
and applying the inequality
$-1+2t \le t^2$.
\end{proof}

\section{Related classes of random variables}
\label{sec:RelatedClasses}

\subsection{Bounded random variables}

The following result confirms that bounded random variables are sub-Poisson,
and provides upper bounds for the optimal sub-Poisson variance proxy.

\begin{proposition}
\label{the:SPBounded}
\leavevmode
\begin{enumerate}
\item[\updated{\rm(i)}] If $a \le X \le b$ almost surely, then $\sigsp^2(X) \le \frac{(b-a)^2}{4}$.
\item[\updated{\rm(ii)}] If $0 \le X \le 1$ almost surely, then $\sigsp^2(X) \le \E X$.
\item[\updated{\rm(iii)}] If $X \le 1$ almost surely, then $\sigspU^2(X) \le \E X^2$.
\item[\updated{\rm(iv)}] If $\abs{X} \le 1$ almost surely, then $\sigsp^2(X) \le \E X^2$.
\item[\updated{\rm(v)}] If $X,\xi$ are independent and such that $X$ is centered and sub-Poisson,
and $\abs{\xi} \le 1$ almost surely, then $\xi X$ is sub-Poisson with $\sigsp^2(\xi X) \le \sigsp^2(X)$.
\end{enumerate}
\end{proposition}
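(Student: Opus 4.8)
The plan is to drive all five items with a single engine: a pointwise exponential inequality valid on the relevant class of bounded variables, followed by the universal estimate $1+t \le e^t$ and the definition \eqref{eq:ProxySP} of the optimal proxy. Item (i) is the exception, which I would settle by comparison with the sub-Gaussian class. Hoeffding's lemma gives $\E e^{\la (X-\E X)} \le e^{(b-a)^2 \la^2/8}$ for all $\la$, i.e.\ $X$ is sub-Gaussian with variance proxy $(b-a)^2/4$. The bridge to the Poisson scale is the elementary bound $\tfrac12 \la^2 \le \phi(\ala)$, which holds for every $\la$ since $\phi(\mu) = \sum_{k \ge 2} \mu^k/k! \ge \mu^2/2$ for $\mu \ge 0$ and $\phi(-\mu) \le \phi(\mu)$ (as $2\sinh\mu \ge 2\mu$). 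Substituting this into the Hoeffding bound gives $\E e^{\la(X-\E X)} \le e^{\frac{(b-a)^2}{4}\phi(\ala)}$, hence $\sigsp^2(X) \le (b-a)^2/4$ by \eqref{eq:ProxySP}.

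The technical heart, and the step I expect to be the main obstacle, is item (iii); I would prove it first and deduce (ii) and (iv) from it. Assuming $\E X^2 < \infty$ (the claim being vacuous otherwise), the key is the pointwise inequality
\[
 e^{\la x} \wle 1 + \la x + x^2 \phi(\la)
 \qquad \text{for all $x \le 1$ and $\la \ge 0$}.
\]
Writing $u(y) = (e^y - 1 - y)/y^2 = \phi(y)/y^2$, this inequality is exactly the monotonicity statement $u(\la x) \le u(\la)$, valid because $\la x \le \la$ and $u$ is nondecreasing on $\R$. The cleanest route to monotonicity is the representation $u(y) = \int_0^1\int_0^1 t\, e^{y s t}\,ds\,dt$, whose integrand is nondecreasing in $y$ for each $(s,t) \in [0,1]^2$; a short computation of $u'$ gives the same conclusion. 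Taking expectations in the pointwise bound yields $\E e^{\la X} \le 1 + \la \E X + (\E X^2)\phi(\la)$, and applying $1 + t \le e^t$ with $t = \la \E X + (\E X^2)\phi(\la)$ gives $\E e^{\la(X-\E X)} \le e^{(\E X^2)\phi(\la)}$ for all $\la \ge 0$, which is $\sigspU^2(X) \le \E X^2$.

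The remaining bounded-variable items are then immediate. For item (iv), both $X$ and $-X$ are bounded above by $1$, so (iii) applies to each and $\sigsp^2(X) = \sigspU^2(X) \vee \sigspL^2(X) \le \E X^2$. Item (ii) follows from (iv) together with the observation that $0 \le X \le 1$ forces $X^2 \le X$, whence $\E X^2 \le \E X$; alternatively it admits a self-contained proof from the convexity bound $e^{\la x} \le 1 - x + x e^\la$ on $[0,1]$ followed again by $1 + t \le e^t$.

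Finally, for item (v) I would condition on $\xi$. By independence $\E e^{\la \xi X} = \E_\xi[M_X(\la \xi)]$, and since $X$ is centered and sub-Poisson, Proposition~\ref{the:ProxySP} gives $M_X(\la \xi) \le e^{\sigsp^2(X)\, \phi(\ala \abs{\xi})}$. The scaling estimate $\phi(\ala\abs{\xi}) \le \xi^2 \phi(\ala) \le \phi(\ala)$ for $\abs{\xi} \le 1$, already established in the proof of Proposition~\ref{the:Convexity}(ii), bounds the conditional moment generating function by $e^{\sigsp^2(X)\phi(\ala)}$ uniformly in $\xi$. Taking the expectation over $\xi$ and noting $\E[\xi X] = \E\xi\,\E X = 0$ gives $\log \E e^{\la \xi X} \le \sigsp^2(X) \phi(\ala)$ for all $\la$, hence $\sigsp^2(\xi X) \le \sigsp^2(X)$.
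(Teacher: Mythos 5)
Your proof is correct, and for items (i), (iii), (iv), (v) it follows the same strategy as the paper: Hoeffding plus the bridge $\tfrac12\la^2 \le \phi(\ala)$ for (i) (the paper packages this bridge as Proposition~\ref{the:SubGaussianComparisonQuantitative}), the pointwise bound $\phi(\la x) \le x^2\phi(\la)$ via monotonicity of $u(y)=\phi(y)/y^2$ for (iii), reflection for (iv), and conditioning on $\xi$ with the scaling estimate from Proposition~\ref{the:Convexity}(ii) for (v). You diverge in two places, both defensibly. First, your proof of the monotonicity of $u$ uses the integral representation
\[
 u(y) \weq \int_0^1\!\!\int_0^1 t\, e^{yst}\,ds\,dt,
\]
whose integrand is pointwise nondecreasing in $y$; this is cleaner than the paper's route, which differentiates $u$ and reduces to showing $g(x)=x(e^x+1)-2(e^x-1)$ has the sign of $x^3$. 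Your representation also handles $y<0$ transparently, where the paper's power-series expression $\sum_{k\ge2} y^{k-2}/k!$ alone is inconclusive and the derivative computation is genuinely needed. Second, you obtain (ii) from (iv) via $X^2 \le X$ on $[0,1]$, giving $\sigsp^2(X) \le \E X^2 \le \E X$, whereas the paper proves (ii) directly from convexity of the exponential, $e^{\la X} \le (1-X) + Xe^\la$, plus $1+t\le e^t$ and $\phi(\la)\le\phi(\ala)$. Your deduction is shorter and even yields the slightly stronger bound $\E X^2$; the paper's direct argument is independent of the harder item (iii), which is its only advantage.
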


\begin{remark}
\label{rem:BoundedCentered}
Proposition~\ref{the:Variance} indicates that
the conclusions in Proposition~\ref{the:SPBounded}:(iii)--(iv) hold as equality when $\E X = 0$.
\end{remark}

\begin{proof}[Proof of Proposition~\ref{the:SPBounded}]
(i) Assume that $a \le X \le b$ almost surely.
A classical inequality often called Hoeffding's lemma
\cite[Inequality (4.16)]{Hoeffding_1963} implies that
$X$ is sub-Gaussian with
$\sigsg^2(X) \le \frac{(b-a)^2}{4}$.
The claim follows because $\sigsp^2(X) \le \sigsg^2(X)$
by Proposition~\ref{the:SubGaussianComparisonQuantitative}.

(ii) Denote $a = \E X$.
The convexity of the exponential function implies that
$e^{\la X} \le (1-X) e^0 + X e^\la$. By taking expectations
and applying the inequality $1+x \le e^x$, 
we find that
$
 \E e^{\la X}
 \le 1 + a (e^\la-1)
 \le \exp \left( a (e^\la-1) \right).
$
Hence $\E e^{\la (X-\E X)} \le e^{ a \phi(\la)}$,
and due to $\phi(\la) \le \phi(\ala)$ (Lemma~\ref{the:cosh}), we conclude that
$\E e^{\la (X-\E X)} \le e^{ a \phi(\ala)}$ for all $\la \in \R$.
By recalling \eqref{eq:ProxySP}, this implies that $\sigsp^2(X) \le a$.

(iii)  Fix $\la \ge 0$. Denote $m(\lambda)=\log\E e^{\la(X-\E X)}$. The inequality $1+t\le e^t$ implies that
\[
m(\la)
\weq -\la \E X+\log\E e^{\la X}
\weq -\la\E X + \log\left(1 + \la\E X+\E \phi(\la X)\right)
\wle \E\phi(\la X).
\]
Define $f(x)=\phi(x)/x^2$ for all $x \neq 0$ and $f(0)=1/2$.
After verifying that $f$ is increasing, we find that $f(\la X)\le f(\la)$ almost surely.  Hence,
\[
 \phi(\la X)
 \weq (\la X)^2 f(\la X)
 \wle (\la X)^2 f(\la)
 \weq X^2 \phi(\la)
\] 
almost surely. By taking expectations, it follows that
$\E\phi(\la X) \le \phi(\la) \E X^2$.
We conclude that $m(\la) \le \phi(\la) \E X^2$, confirming that $\sigspU^2(X)\le\E X^2$.

It remains to show that $f$ is increasing. 
Because $f(x)=\sum_{k=2}^{\infty}x^{k-2}/k!$
has a power series representation with an infinite radius of convergence, it suffices to show that $f'(x)\geq 0$ for all $x\neq 0$. Define $g(x)=x(e^{x}+1)-2 (e^{x}-1)$. Differentiation shows that
\[
\begin{split}
f'(x)
&\weq \frac{x^2(e^{x}-1)-2x(e^{x}-1-x)}{x^4}
\weq \frac{x^2(e^{x}+1)-2x (e^{x}-1)}{x^4}
\weq \frac{g(x)}{x^3}, \\
g'(x)
&\weq (e^{x}+1) + x e^{x}-2e^{x}
\weq 1-(1-x)e^{x} 
\wge 1-e^{-x}e^{x}
\weq 0.
\end{split}
\]
Since $g(0)=0$, we conclude that $f'(x)=g(x)/x^3\ge 0$ for all $x\neq 0$.

(iv) Assume that $\abs{X} \le 1$ almost surely.
Then $X \le 1$ almost surely, and (iii) implies that
$\sigspU^2(X) \le \E X^2$.
Furthermore, $-X \le 1$ almost surely, and (iii) implies that
$\sigspL^2(X) = \sigspU^2(-X) \le \E X^2$.
The claim follows by recalling that $\sigsp^2(X) = \sigspU^2(X) \vee \sigspL^2(X)$.

(v) We note that $\xi X$ is centered, and the independence of $\xi$ and $X$ implies that
\[
 \E ( e^{\la \xi X} \cond \xi )
 \wle e^{\sigsp^2(X) \phi(\abs{\xi \la})}
 \wle e^{\sigsp^2(X) \phi(\abs{\la})}
\]
almost surely. Therefore, $\E e^{\la \xi X} \le e^{\sigsp^2(X) \phi(\abs{\la})}$,
and $\sigsp^2(\xi X) \le \sigsp^2(X)$ due to \eqref{eq:ProxySP}.
\end{proof}

\subsection{Distributions with sub-Gaussian tail mass}

Let us recall the classical definition of sub-Gaussian tails.
An integrable real-valued random variable $X$
is called \emph{sub-Gaussian} (resp.\ \emph{upper sub-Gaussian},
\emph{lower sub-Gaussian})
if for some number $\sig^2 \ge 0$,
\begin{equation}
 \label{eq:SG}
 \E e^{\la (X - \E X)} \wle e^{\sig^2 \la^2/2 }
\end{equation}
holds for all $\la \in \R$ (resp.\ for all $\la \ge 0$, for all $\la \le 0$).

\begin{proposition}
\label{the:SubGaussianComparisonQualitative}
Every (upper/lower) sub-Gaussian random variable is (upper/lower) sub-Poisson.
\end{proposition}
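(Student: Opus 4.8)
The plan is to reduce the statement to a single pointwise comparison between the two profile functions defining the sub-Gaussian and sub-Poisson properties. Writing $\ttX = X - \E X$, the upper sub-Gaussian hypothesis \eqref{eq:SG} asserts $\E e^{\la \ttX} \le e^{\sig^2 \la^2/2}$ for all $\la \ge 0$, whereas the upper sub-Poisson condition \eqref{eq:SPU} requires $\E e^{\la \ttX} \le e^{\sig^2 \phi(\la)}$ for the same range of $\la$. It therefore suffices to show that $\tfrac12 \la^2 \le \phi(\la)$ for every $\la \ge 0$, since then the sub-Gaussian bound is dominated by the sub-Poisson bound with the \emph{same} variance proxy $\sig^2$.

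First I would establish this pointwise inequality directly from the power series $\phi(\la) = \sum_{k \ge 2} \la^k / k!$, whose $k=2$ term is exactly $\la^2/2$: for $\la \ge 0$ all remaining terms are nonnegative, so $\phi(\la) \ge \la^2/2$. (Equivalently, $\phi(0)=0$ and $\phi'(\la) = e^\la - 1 \ge \la$ on $[0,\infty)$.) Next I would assemble the three cases. For the upper case, chaining the hypothesis with this bound gives, for all $\la \ge 0$,
\[
 \E e^{\la \ttX}
 \wle e^{\sig^2 \la^2/2}
 \wle e^{\sig^2 \phi(\la)},
\]
which is exactly \eqref{eq:SPU}. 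The lower case follows by applying the same reasoning to $-X$. The two-sided case follows either by combining the two one-sided statements, or directly by noting that $\tfrac12 \la^2 = \tfrac12 \ala^2 \le \phi(\ala)$ for every $\la \in \R$, so that the full sub-Gaussian bound \eqref{eq:SG} is dominated by the full sub-Poisson bound \eqref{eq:SP}.

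There is no genuine obstacle here: the entire content is the elementary bound $\tfrac12 \la^2 \le \phi(\la)$ on $[0,\infty)$, and the argument incidentally shows that the same $\sig^2$ works, yielding the quantitative refinement $\sigsp^2(X) \le \sigsg^2(X)$. The one point worth flagging is \emph{why} the comparison is phrased through $\phi(\ala)$ rather than $\phi(\la)$: for $\la < 0$ one has $\phi(\la) < \tfrac12\la^2$ (the cubic term $\la^3/6$ is negative), so the asymmetric variant mentioned in the earlier remark would fail to dominate the lower sub-Gaussian tail. Thus the symmetric formulation is precisely what guarantees that the inclusion holds for both tails simultaneously.
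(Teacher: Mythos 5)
Your proposal is correct and rests on exactly the same key fact as the paper's proof, namely the pointwise bound $\tfrac12\la^2 \le \phi(\la)$ for $\la \ge 0$ (equivalently $\tfrac12\la^2 \le \phi(\ala)$ for all $\la$); the paper merely packages this through the quantitative comparison $\sigsp^2(X) \le \sigsg^2(X)$ of optimal variance proxies (Proposition~\ref{the:SubGaussianComparisonQuantitative}) together with the finiteness characterisations, whereas you chain the defining moment generating function inequalities directly. Your argument even recovers the same quantitative refinement, so the two routes coincide in substance.
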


We prove Proposition~\ref{the:SubGaussianComparisonQualitative} as a corollary
of a quantitative characterisation of associated optimal variance proxies.
For any random variable $X$ with a finite mean,
the \emph{optimal sub-Gaussian variance proxy} is defined by
\begin{equation}
 \label{eq:ProxySG}
 \sigsg^2(X)
 \weq \sup_{\la \ne 0} \frac{\log \E e^{\la (X - \E X)} }{\la^2/2}.
\end{equation}
The \emph{optimal upper and lower sub-Gaussian variance proxies} 
are defined by
\begin{equation}
 \label{eq:ProxySGU}
 \sigsgU^2(X)
 \weq \sup_{\la > 0} \frac{\log \E e^{\la (X - \E X)} }{\la^2/2}.
\end{equation}
and
$\sigsgL^2(X) = \sigsgU^2(-X)$.

\begin{proposition}
\label{the:SubGaussianComparisonQuantitative}
$\sigsp^2(X) \le \sigsg^2(X)$ and 
$\sigspUL^2(X) \le \sigsgUL^2(X)$
for all integrable real-valued random variables $X$.
\end{proposition}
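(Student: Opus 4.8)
The plan is to reduce both inequalities to a single elementary pointwise comparison between the Poisson normaliser $\phi$ and the Gaussian normaliser $\la^2/2$, together with the observation that the numerator defining each proxy is nonnegative. Writing $N(\la) = \log \E e^{\la (X - \E X)}$, Jensen's inequality gives $\E e^{\la (X - \E X)} \ge e^{\la \E (X - \E X)} = 1$, so that $N(\la) \ge 0$ for every $\la$ (allowing the value $+\infty$ when the moment generating function diverges).

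Next I would record the key inequality $\phi(\ala) \ge \la^2/2$. This is immediate from the power series $\phi(s) = \sum_{k \ge 2} s^k/k!$: for $s = \ala \ge 0$ every term is nonnegative and the $k=2$ term equals $s^2/2 = \la^2/2$. In particular $\phi(\ala) > 0$ whenever $\la \ne 0$, so both denominators appearing in the proxy definitions \eqref{eq:ProxySP}--\eqref{eq:ProxySG} are strictly positive.

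Combining these two facts, for each fixed $\la \ne 0$ we have $0 \le N(\la)$ and $0 < \la^2/2 \le \phi(\ala)$, whence
\[
 \frac{N(\la)}{\phi(\ala)} \ \le \ \frac{N(\la)}{\la^2/2}.
\]
Taking the supremum over $\la \ne 0$ yields $\sigsp^2(X) \le \sigsg^2(X)$ by the definitions \eqref{eq:ProxySP} and \eqref{eq:ProxySG}. Restricting the supremum to $\la > 0$ and using $\phi(\la) \ge \la^2/2$ in exactly the same way gives $\sigspU^2(X) \le \sigsgU^2(X)$; applying this to $-X$ and recalling $\sigspL^2(X) = \sigspU^2(-X)$ and $\sigsgL^2(X) = \sigsgU^2(-X)$ gives the lower statement, so that $\sigspUL^2(X) \le \sigsgUL^2(X)$.

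Since the whole argument is a termwise ratio comparison valid in the extended reals, I do not anticipate a real obstacle: the only points requiring care are the sign of the numerator (handled by Jensen) and the degenerate case in which $\E e^{\la (X - \E X)} = \infty$ for some admissible $\la$, where both the sub-Poisson and sub-Gaussian proxies equal $+\infty$ and the inequalities hold trivially.
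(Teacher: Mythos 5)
Your proof is correct and follows essentially the same route as the paper: the key step in both is the termwise comparison $\phi(\ala) \ge \la^2/2$ (from the power series of $\phi$) applied inside the suprema defining the proxies, with the lower-tail case reduced to the upper-tail case via $\sigspL^2(X) = \sigspU^2(-X)$ and $\sigsgL^2(X) = \sigsgU^2(-X)$. Your explicit verification that the numerator $\log \E e^{\la(X-\E X)} \ge 0$ (by Jensen) is a point the paper leaves implicit, but it is genuinely needed for the ratio comparison to go the right way, so making it explicit is a small improvement rather than a deviation.
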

\begin{proof}
Observe that
\[
 \phi(\la)
 \weq \sum_{k \ge 2} \frac{\la^k}{k!}
 \wge \frac12 \la^2
 \qquad \text{for all $\la \ge 0$}.
\]
Hence
\[
 \sigsp^2(X)
 \weq \sup_{\la \ne 0} \frac{\log \E e^{\la (X - \E X)} }{\phi(\ala)}
 \wle \sup_{\la \ne 0} \frac{\log \E e^{\la (X - \E X)} }{\frac12 \la^2}
 \weq \sigsg^2(X).
\]

By repeating the above argument with the supremum restricted to $\la > 0$, we find that
$\sigspU^2(X) \le \sigsgU^2(X)$.  The inequality $\sigspL^2(X) \le \sigsgL^2(X)$ follows by noting that
$\sigspL^2(X) = \sigspU^2(-X)$
and 
$\sigsgL^2(X) = \sigsgU^2(-X)$.
\end{proof}

\begin{proof}[Proof of Proposition~\ref{the:SubGaussianComparisonQualitative}]
Recall by Proposition~\ref{the:ProxySP} that $X$ is sub-Poisson if and only if
$\sigsp^2(X) < \infty$.
Analogously, we have the classical characterisation \citep{Buldygin_Kozachenko_1980}
that $X$ is sub-Gaussian if and only if 
$\sigsg^2(X) < \infty$.
Proposition~\ref{the:SubGaussianComparisonQuantitative} then implies that
any sub-Gaussian random variable is sub-Poisson.

The claims for upper/lower sub-Poisson properties
follow from Proposition~\ref{the:SubGaussianComparisonQuantitative}
analogously.
\end{proof}

\subsection{Distributions with sub-exponential tail mass}

Random variables with sub-exponential\footnote{
Here we consider light-tailed random variables with sub-exponential tail mass
\citep{Vershynin_2018,Wainwright_2019}, in contrast
to heavy-tailed random variables with sub-exponential tail decay rates \citep{Foss_Korshunov_Zachary_2013,Nair_Wierman_Zwart_2022}.}
tail mass satisfying $\pr( \abs{X} \ge t ) \le 2e^{-ct}$
do not admit a convenient characterisation using a variance proxy.
Instead, they are typically characterised by the Orlicz norm $\senorm{X}$,
where we denote
\[
 \norm{X}_{\psi_p}
 \weq \inf \Big\{ K > 0 \colon \E e^{(\abs{X}/K)^p}  \le 2 \Big\}, \qquad p \ge 1.
\]
The Orlicz norm $\norm{X}_{\psi_2}$ characterises sub-Gaussian random variables:
$X$ is sub-Gaussian if and only if $\sgnorm{X} < \infty$.
Specifically, the norms $\sgnorm{X}$ and $\sigsg(X)$ are equivalent for centered variables $X$, with
sharp bounds given by $\sqrt{3/8} \sgnorm{X} \le \sigsg(X) \le \sqrt{\log 2} \sgnorm{X}$
\citep{Leskela_Zhukov_2026}. As a consequence of 
Proposition~\ref{the:SubGaussianComparisonQuantitative},
it follows that the optimal sub-Poisson variance proxy of a centered variable $X$ is bounded by
\[
 \sigsp^2(X) \wle \log 2 \cdot \sgnorm{X}^2.
\]

Since Poisson distributions have lighter than exponential tails, one might expect
$\senorm{X}$ to be bounded in terms of $\sigsp^2(X)$.
While this is true, 
the bound is more subtle than in the sub-Gaussian case, because
$\sigsp(X)$ is not a norm (Example~\ref{exa:ScaledSkellam}).
Proposition~\ref{the:Orlicz1} below presents such a bound
in terms of
\updated{the principal branch of the Lambert \(W\) function,
defined by
\[
 W(x)e^{W(x)} = x,
 \qquad x \in [-e^{-1},\infty),
\]
or equivalently as the inverse of the map \(w \mapsto we^w\) on
\([-1,\infty)\).}
Because 
$W(x) \sim x$ for $x \to 0$,
and 
$W(x) \sim \log x$ for $x \to \infty$
\citep{Corless_etal_1996,Hoorfar_Hassani_2008} we see as 
a consequence of Proposition~\ref{the:Orlicz1} that
$\senorm{X} \lesim \frac{1}{\log(1/\sig)}$ for $\sig \to 0$,
and
$\senorm{X} \lesim \sig$ for $\sig \to \infty$.

\begin{proposition}
\label{the:Orlicz1}
For any centered sub-Poisson random variable $X$ with
variance proxy $\sig^2 > 0$,
\[
 \senorm{X}
 \wle 4 \left( \frac{1}{W(1/\sig)} \wedge \frac{1}{W(1/\sig^2)} \right).
\]
\end{proposition}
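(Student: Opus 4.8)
The plan is to reduce the estimate to a single moment generating function bound for $\abs{X}$, and then to minimise a one-parameter family of sufficient conditions, exploiting the defining identity $W(y) e^{W(y)} = y$ of the Lambert function. First I would feed $X$ into Proposition~\ref{the:AbsoluteValue}: since $X$ is centered and sub-Poisson with proxy $\sig^2$, so that $\sigsp^2(X) \le \sig^2$, the variable $\abs{X}$ is upper sub-Poisson with $\sigspU^2(\abs{X}) \le 2\sig^2$, whence by Proposition~\ref{the:ProxySP}
\[
 \E e^{\la \abs{X}} \wle e^{\la \E\abs{X} + 2\sig^2 \phi(\la)}, \qquad \la \ge 0 .
\]
It is essential to use this \emph{centered} bound rather than the cruder $\E e^{\la\abs{X}} \le 2 e^{\sig^2\phi(\la)}$ coming from $e^{\la\abs{X}} \le e^{\la X} + e^{-\la X}$: the leading factor $2$ in the latter already saturates the threshold in the definition of $\senorm{X}$ and leaves no room as $\la \downto 0$.

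Next I would control the mean term by $\E\abs{X} \le \sqrt{\E X^2} = \sqrt{\Var(X)} \le \sig$, using Jensen's inequality together with Proposition~\ref{the:Variance}, and I would bound the exponential via the elementary power-series inequality $\phi(x) \le \tfrac12 x^2 e^x$, valid for $x \ge 0$ by comparing coefficients. Combining these reduces the problem to the clean sufficient condition: whenever $\la > 0$ satisfies
\[
 \la\sig + \sig^2 \la^2 e^{\la} \wle \log 2 ,
\]
one has $\E e^{\la\abs{X}} \le 2$, and therefore $\senorm{X} \le 1/\la$ by the definition of the $\psi_1$-norm.

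The heart of the argument is then to verify this condition at the two points $\la = \tfrac14 W(1/\sig^2)$ and $\la = \tfrac14 W(1/\sig)$, which produce the bounds $\senorm{X} \le 4/W(1/\sig^2)$ and $\senorm{X} \le 4/W(1/\sig)$; the claimed estimate follows by taking the minimum of the two. For the first choice, writing $w = W(1/\sig^2)$ so that $\sig^2 = (w e^w)^{-1}$, the Lambert identity collapses the two terms to $\la\sig = \tfrac14 w^{1/2} e^{-w/2}$ and $\sig^2\la^2 e^{\la} = \tfrac{1}{16} w\, e^{-3w/4}$. Each is uniformly small over $w > 0$: maximising the elementary functions $w^{1/2}e^{-w/2}$ (at $w=1$) and $w e^{-3w/4}$ (at $w=4/3$) keeps their sum comfortably below $\log 2$. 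The second choice is handled identically with $v = W(1/\sig)$ and $\sig = (v e^v)^{-1}$, giving $\la\sig = \tfrac14 e^{-v} \le \tfrac14$ and $\sig^2\la^2 e^{\la} = \tfrac{1}{16} e^{-7v/4} \le \tfrac{1}{16}$.

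The main obstacle is the exponential term $e^{\la}$: choosing $\la$ of the exact order $W(1/\sig^2)$ (that is, with factor $1$ rather than $\tfrac14$) would make $\sig^2\la^2 e^{\la}$ of order $w$, hence unbounded as $\sig \downto 0$. The damping factor $\tfrac14$ is precisely what turns $e^{\la} = e^{w/4}$ into the decaying $e^{-3w/4}$ after the substitution $\sig^2 = (w e^w)^{-1}$, and it is this balancing, rather than any delicate estimate, that forces both terms to stay uniformly small. The only genuinely nonroutine decisions are recognising that the centered absolute-value bound (not the two-sided one) must be used to retain slack at $\la = 0$, and fixing the damping constant so that both Lambert-$W$ substitutions land safely under $\log 2$; everything else is elementary optimisation.
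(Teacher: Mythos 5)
Your proof is correct, but it takes a genuinely different route from the paper's. The paper keeps the crude two-sided bound $\E e^{p\abs{X}/K} \le M(p/K)+M(-p/K) \le 2e^{\sig^2\phi(p/K)}$ and dissolves the problematic factor $2$ exactly where you said it must be dissolved, but by a different device: Jensen's inequality $\E e^{\abs{X}/K} \le \big(\E e^{p\abs{X}/K}\big)^{1/p}$ with $p>1$ turns the $2$ into $2^{1/p}$ and leaves slack $2^{(p-1)/p}$ for the exponential term. This forces a two-parameter optimisation over $K$ and $r=p-1$, carried out exactly in case (i) via the inverse of $h(u)=(1+u)\log(1+u)-u$ expressed through the Lambert function, and crudely (fixing $r=2$) in case (ii); it also requires Lemma~\ref{the:LambertW} to pass from $W(\log 2/\sig^2)$ and $W(\sqrt{\log 2}/\sig)$ to $W(1/\sig^2)$ and $W(1/\sig)$. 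You instead remove the factor $2$ at the source by invoking Proposition~\ref{the:AbsoluteValue}, paying with the doubled proxy $2\sig^2$ and the extra mean term $\la\,\E\abs{X} \le \la\sig$ (controlled via Proposition~\ref{the:Variance}); after the estimate $\phi(x)\le\tfrac12 x^2e^x$ (which the paper also uses, in its case (ii)) you are left with the single sufficient condition $\la\sig+\sig^2\la^2e^{\la}\le\log 2$, verified directly at $\la=\tfrac14 W(1/\sig^2)$ and $\la=\tfrac14 W(1/\sig)$ by elementary maximisation: the substitutions give $\tfrac14 w^{1/2}e^{-w/2}+\tfrac1{16}we^{-3w/4}\le \tfrac14 e^{-1/2}+\tfrac{1}{12e}<\log 2$ and $\tfrac14 e^{-v}+\tfrac1{16}e^{-7v/4}\le\tfrac{5}{16}<\log 2$, both of which check out. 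What each approach buys: the paper's Jensen device is self-contained (no absolute-value lemma needed) and its intermediate constants $e/\log 2$ and $3/(2\sqrt{\log 2})$ are sharper than $4$; your version avoids the optimisation over the Hölder exponent and Lemma~\ref{the:LambertW} entirely, reuses Proposition~\ref{the:AbsoluteValue} already proved in the paper, and lands exactly on the stated constant $4$, at the cost of two numerical verifications where your slack (roughly $0.19$ and $0.31$ against $\log 2\approx 0.69$) is comfortable.
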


The proof of Proposition~\ref{the:Orlicz1} utilises the following simple lemma.

\begin{lemma}
\label{the:LambertW}
$c W(x) \le W(cx) \le W(x)$ for all $x \ge 0$ and $c \in [0,1]$.
\end{lemma}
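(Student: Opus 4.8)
The plan is to reduce everything to the monotonicity of the Lambert $W$ function and the defining identity $W(x) e^{W(x)} = x$. First I would record the basic structural facts: the map $f(w) = w e^w$ is a strictly increasing bijection from $[0,\infty)$ onto $[0,\infty)$, since $f(0) = 0$, $f(w) \to \infty$ as $w \to \infty$, and $f'(w) = (1+w)e^w > 0$. Consequently its inverse $W$ is well-defined, nonnegative, and strictly increasing on $[0,\infty)$, and for any $a, b \ge 0$ we have the equivalence $W(a) \ge b$ if and only if $a \ge f(b) = b e^b$ (apply the increasing map $f$ to both sides). This equivalence is the only tool I need.

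The upper bound $W(cx) \le W(x)$ is then immediate: for $c \in [0,1]$ we have $cx \le x$, so monotonicity of $W$ gives the claim directly. For the lower bound $cW(x) \le W(cx)$, I would set $w = W(x)$, so that $x = w e^w$ by the defining identity, and invoke the equivalence above with $a = cx$ and $b = cw$ (both nonnegative). This reduces the desired inequality $W(cx) \ge cw$ to the elementary claim $cx \ge (cw)\, e^{cw}$. Substituting $cx = c\, w e^w$, the claim becomes $c w\, e^w \ge c w\, e^{cw}$, i.e.\ $cw\,(e^w - e^{cw}) \ge 0$, which holds because $c \ge 0$, $w \ge 0$, and $cw \le w$ forces $e^{cw} \le e^w$.

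I would finish by disposing of the degenerate cases $c = 0$ or $x = 0$, where both displayed inequalities collapse to $0 \le W(0) = 0$ and hold trivially, so the equivalence argument may freely assume $c, x > 0$. I do not anticipate a genuine obstacle here: the entire content is the inversion-by-monotonicity equivalence together with the monotonicity of the exponential. The only point requiring a little care is to justify that equivalence cleanly on $[0,\infty)$, so that the reduction of the lower bound to a one-line exponential comparison is rigorous rather than merely formal.
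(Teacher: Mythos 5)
Your proof is correct and follows essentially the same route as the paper: both establish that $V(w)=we^w$ is a strictly increasing bijection on $[0,\infty)$ (so $W$ is too), obtain the upper bound from monotonicity, and reduce the lower bound $cW(x)\le W(cx)$ to a one-line exponential comparison via the defining identity $W(x)e^{W(x)}=x$. The only cosmetic difference is that the paper rewrites $y_1=W(cx)$ as $y_1=cxe^{-y_1}$ and uses $y_1\le y_2$, whereas you apply the increasing map $V$ to both sides of the desired inequality; these are the same argument in different clothing.
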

\begin{proof}
Differentiation shows that $V(x) = x e^x$ is a strictly increasing bijection from $[0,\infty)$ to $[0,\infty)$.
Therefore, so is $W = V^{-1}$.
Denote $y_1 = W(cx)$ and $y_2 = W(x)$. Then
$y_1 e^{y_1} = cx$ and $y_2 e^{y_2} = x$,
and the inequality $y_1 \le y_2$ implies that
$y_1 = c e^{-y_1} x \ge c e^{-y_2} x = c y_2$.
\end{proof}

\begin{proof}[Proof of Proposition~\ref{the:Orlicz1}]
The assumptions imply that 
$M(\la) \le e^{\sig^2 \phi(\ala)}$ for all $\la \in \R$.
By applying Jensen's inequality and the fact that
$e^{\abs{t}} \le e^t + e^{-t}$,
it follows that for any $K > 0$ and $p>1$,
\[
 \E e^{\abs{X}/K}
 \wle \big( \E e^{p \abs{X}/K} \big)^{1/p}
 \wle \big( M(p/K) + M(-p/K) \big)^{1/p}
 \wle \big( 2 e^{\sig^2 \phi(p/K)} \big)^{1/p}.
\]
Hence $\E e^{\abs{X}/K} \le 2$ when
$e^{\sig^2 \phi(p/K)} \le 2^{p-1}$.
The latter condition is equivalent to $\phi(\frac{1+r}{K}) \le \tau^2 r$,
where $r=p-1$ and $\tau^2 = \frac{\log 2}{\sig^2}$.
We conclude that $\senorm{X} \le K$ for all $K > 0$ such that
\begin{equation}
 \label{eq:SENormUB1}
 \phi\Big(\frac{1+r}{K} \Big) \wle \tau^2 r
 \qquad \text{for some $r>0$.}
\end{equation}
We will combine \eqref{eq:SENormUB1} with two upper bounds of $\phi$
to derive tractable bounds for $\senorm{X}$.

(i)
Because $\phi(t) \le e^t-1$ for all $t \ge 0$, we see that
a sufficient condition for \eqref{eq:SENormUB1} is that
$\exp(\frac{1+r}{K}) \le 1 + \tau^2 r$ for some $r > 0$.
This leads to
\begin{equation}
 \label{eq:SENormUB2}
 \senorm{X}
 \wle \inf_{r > 0} \frac{1+r}{\log(1 + \tau^2 r)}.
\end{equation}
To minimise the right side of \eqref{eq:SENormUB2}, denote $f(r) = \frac{1+r}{\log(1 + \tau^2 r)}$.
Then
\[
 f'(r)
 \weq \frac{1}{\log^2(1 + \tau^2 r)} \left( \log(1 + \tau^2 r) - \frac{1+r}{1 + \tau^2 r} \tau^2 \right)
 \weq \frac{h(\tau^2 r) - \tau^2}{(1 + \tau^2 r)\log^2(1 + \tau^2 r)},
\]
where $h(u) = (1+u)\log(1+u) - u$.  Differentiation shows that
$h$ is strictly increasing and hence invertible on $[0,\infty)$.
It follows that $f$ is minimised at $r_* = h^{-1}(\tau^2)/\tau^2$.
The equality $h(\tau^2 r_*) = \tau^2$ implies that
$\log(1+\tau^2 r_*) = \frac{1+r_*}{1+\tau^2 r_*} \tau^2$, which implies that
\[
 \inf_{r > 0} \frac{1+r}{\log(1 + \tau^2 r)}
 \weq f(r_*)
 \weq \frac{1 + h^{-1}(\tau^2)}{\tau^2}.
\]
The inverse of $h$ \updated{can be expressed as}
 $h^{-1}(y) = \exp( 1 + W(\frac{y-1}{e} )) -1 $.
Then
\[
 \frac{1 + h^{-1}(\tau^2)}{\tau^2}
 \weq \frac{\exp( 1 + W(\frac{\tau^2-1}{e} ))}{\tau^2}
 \wle \frac{\exp( 1 + W(\tau^2))}{\tau^2}.
\]
Because $W(\tau^2) e^{W(\tau^2)} = \tau^2$, we find that
$e^{1+W(\tau^2)} = e \tau^2 / W(\tau^2)$.
It follows by applying \eqref{eq:SENormUB2} and Lemma~\ref{the:LambertW} that
\begin{equation}
 \label{eq:SENormUB3}
 \senorm{X}
 \wle \frac{e}{W(\tau^2)}
 \wle \frac{e/(\log 2)}{W(1/\sig^2)}.
\end{equation}

(ii)
Alternatively, we may apply a bound
\[
 \phi(t)
 \weq \int_0^t \int_0^s e^r \, dr \, ds
 \wle \frac12 e^t t^2
 \weq 2 ( (t/2) e^{t/2})^2
 \weq 2 V(t/2)^2,
\]
where $V(t) = t e^t$.
Hence for \eqref{eq:SENormUB1} it suffices that
$K > 0$ satisfies
$2 V( \frac{1+r}{2K})^2 \le \tau^2 r$ for some $r > 0$.
By plugging in $r=2$, this is equivalent to
$V(\frac{3/2}{K}) \le \tau$.
Because $W$ is the inverse of $V$, this is equivalent to
$K \ge \frac{3/2}{W(\tau)}$. We conclude with the help of Lemma~\ref{the:LambertW} that
\begin{equation}
 \label{eq:SENormUB4}
 \senorm{X}
 \wle \frac{3/2}{W(\tau)}
 \wle \frac{3/(2 \sqrt{\log 2})}{W(1/\sig)}.
\end{equation}
The claim follows by \eqref{eq:SENormUB3}--\eqref{eq:SENormUB4},
and noting that
$3/(2 \sqrt{\log 2}) \le e/(\log 2) \le 4$.
\end{proof}

\section{Examples}
\label{sec:Examples}

\begin{example}[Bernoulli distribution]
\label{exa:Bernoulli}
Assume that $\pr(X=1) = 1-\pr(X=0) = p$
for some $p \in (0,1)$.
By Proposition~\ref{the:SPBounded} (see also Remark~\ref{rem:BoundedCentered}),
$X$ is sub-Poisson with 
$\sigsp^2(X) = \sigspUL^2(X) = p(1-p)$.
Moreover \cite[Theorem 2.1]{Buldygin_Moskvichova_2013},
$X$ is sub-Gaussian with
$\sigsg^2(X) = \frac{1/2-p}{\log(1/p) + \log(1-p)}$
for $p \ne \frac12$.
In particular,
$\sigsp^2(X) \sim p$ is an order of magnitude smaller than
$\sigsg^2(X) \sim \frac{1}{\log(1/p)}$
in sparse regimes with $p \to 0$.
\end{example}

\begin{example}[Binomial distribution]
\label{exa:Binomial}
Let $X_n$ be distributed according to a binomial distribution with $n$ trials
and success probability $p$.
Then by Proposition~\ref{the:IndependentSum},
it follows that $\sigsp^2(X_n) \le n \sigsp^2(X_1)$. 
We saw in Example~\ref{exa:Bernoulli} that $\sigsp^2(X_1) = p(1-p)$.
Therefore, $\sigsp^2(X_n) \le n p(1-p)$. Because $\Var(X_n) = n p(1-p)$, it follows by
Proposition~\ref{the:Variance} that
$X_n$ is sub-Poisson with $\sigsp^2(X_n) = n p(1-p)$.
\end{example}

\begin{example}[Rademacher distribution]
\label{exa:Rademacher}
Assume that $\pr(X= \pm 1) = \frac12$.
Then by Proposition~\ref{the:SPBounded} (see also Remark~\ref{rem:BoundedCentered}),
we see that $X$ is sub-Poisson with $\sigsp^2(X) = \sigspUL^2(X) = 1$.
It is also well known that $\sigsg^2(X)=1$ \cite[Example 2.3]{Wainwright_2019}.
\end{example}

\begin{example}[Scaled Rademacher distribution]
Assume that $\pr(X_a = \pm a) = \frac12$ for some $a \ge 0$.
Because $\sigsg(X)$ is a norm on the space of centered random variables \cite[Theorem 1]{Buldygin_Kozachenko_1980},
we find that $\sigsg^2(X_a) = a^2 \sigsg^2(X_1)$.
In light of Example~\ref{exa:Rademacher}, we see that $\sigsg^2(X_a) = a^2$.
Because $\Var(X_a)=a^2$, we find that
\updated{$\Var(X_a) = \sigsp^2(X_a) = \sigsg^2(X_a)$}
by Propositions~\ref{the:Variance} and~\ref{the:SubGaussianComparisonQuantitative}.
We conclude that \updated{$X_a$} is sub-Poisson with \updated{$\sigsp^2(X_a) = a^2$}.
\end{example}

\begin{example}[Poisson distribution]
\label{exa:Poisson}
Let $X$ be Poisson distributed with parameter $a\geq 0$.
The centered moment generating function equals
$\E e^{\la (X-\E X)} = e^{a \phi(\la)}$. By noting that $\phi(-\la) \le \phi(\la)$ for all $\la \ge 0$
(Lemma~\ref{the:cosh}), we obtain $\E e^{\la (X-\E X)}\leq e^{a \phi(\abs{\la})}$.
By \eqref{eq:ProxySP}, we find that
$\sigsp^2(X) \leq a$. Since $\Var(X)=a$, we conclude that $\sigsp^2(X) = \sigspUL^2(X) = a$ by Proposition~\ref{the:Variance}.
On the other hand,
$X$ is not sub-Gaussian unless $a=0$ because $\phi(\la)/\la^2 \to \infty$ as $\la \to \infty$.
\end{example}

\begin{example}[Skellam distribution]
\label{exa:Skellam}
The \emph{Skellam distribution} \citep{Skellam_1946}
is defined as the law of a random variable 
$X = X_1-X_2$ where $X_1,X_2$ are independent and Poisson distributed with parameters $a_1,a_2 \ge 0$.
Then $\E X = a_1-a_2$, 
$\Var(X) = a_1+a_2$,
and
$\log \E e^{\la (X-\E X)} = a_1 \phi(\la) + a_2 \phi(-\la)$.
As in Example~\ref{exa:Poisson}, we find that
$X$ is sub-Poisson with
$\sigsp^2(X) = \sigspUL^2(X) = a_1+a_2$,
but not sub-Gaussian unless $a_1=a_2=0$.
\end{example}

\begin{example}[Scaled Skellam distribution]
\label{exa:ScaledSkellam}
Let $X_a = a X_1$ for some $a\geq 0$, where $X_1$ is distributed according to a Skellam distribution
with parameters $a_1=a_2=1$ (Example~\ref{exa:Skellam}).
Then $\E X_a = 0$ and $\log \E e^{\la X_a} = \phi(a\la) + \phi(-a\la)$.
Because $X_a$ is symmetric,
we see that $\sigsp^2(X_a) = \sigspUL^2(X_a)$. If $a>1$, then
\[
 \sigsp^2(X_a)
 \weq \sup_{\la > 0} \frac{\phi(a \la) + \phi(-a\la)}{\phi(\la)}
 \wge  \lim_{\la\to\infty} \frac{\phi(a\la)}{\phi(\la)}
 \weq \infty.
\]
If $a \le 1$, then  Propositions~\ref{the:Variance} and~\ref{the:Convexity}
imply $\Var(X_a)\leq\sigsp^2(X_a)\leq a^2\sigsp^2(X_1)$. As shown in Example~\ref{exa:Skellam},
we also know that $a^2\sigsp^2(X_1)=a^2\Var(X_1)=\Var(X_a)$. As a consequence,
$\sigsp^2(X_a)=\Var(X_a)=2a^2$. 
\end{example}

\begin{example}[Gaussian distribution]
\label{exa:Gaussian}
Let $X$ be a Gaussian random variable with variance $\sig^2$.
Because $\E e^{\la(X-\E X)} = e^{\sig^2 \la^2/2}$,
we see by \eqref{eq:ProxySG}--\eqref{eq:ProxySGU} that
$\sigsg^2(X) = \sigsgUL^2(X) = \Var(X)$.
By applying Propositions~\ref{the:Variance} and~\ref{the:SubGaussianComparisonQuantitative},
we find that
$\Var(X) \le \sigsp^2(X) \le \sigsg^2(X)$.
Therefore, $X$ is sub-Poisson with $\sigsp^2(X) = \Var(X)$.
\end{example}

\begin{example}[Exponential distribution]
\label{exa:Exponential}
Let $X$ be exponentially distributed with rate parameter $a > 0$.
Then
$\E e^{\la X} = \frac{1}{1-\la/a}$ for $\la < a$
and 
$\E e^{\la X} = \infty$ for $\la \ge a$.
Then $\sigspU^2(X) = \infty$ implies that $X$ is not upper sub-Poisson.
For the lower tail, we note that
\[
 \sigspL^2(X)
 \weq \sup_{\la > 0} \frac{\log \E e^{-\la(X-\E X)}}{\phi(\la)}
 \weq \sup_{\la > 0} \frac{\la/a - \log(1+\la/a)}{\phi(\la)}.
\]
Because $\log(1+t) \ge t - \frac12 t^2$ for all $t \ge 0$,
and $\phi(\la) \ge \frac12 \la^2$ for all $\la \ge 0$,
it follows that $\sigspL^2(X) \le 1/a^2$.
Because $\Var(X)=1/a^2$, we conclude by Proposition~\ref{the:Variance} that 
$X$ is lower sub-Poisson with $\sigspL^2(X) = 1/a^2$\updated{.}
\end{example}

\subsubsection*{Acknowledgements}

We are grateful to Maximilien Dreveton for pointing out that the statements in Proposition~\ref{the:SPBounded}:(iii)--(iv) hold without the assumption $\E X = 0$. \updated{We also thank the anonymous reviewer for helpful comments that improved the presentation.}

Ian Välimaa's research was partly supported by a doctoral research grant from the Emil Aaltonen Foundation.

\appendix

\section{Auxiliary results}
\label{sec:AuxiliaryResults}

\begin{lemma}
\label{the:cosh}
For all \( x \in \mathbb{R} \),
the function $\phi(x) = e^x - 1 - x$ satisfies
\[
 \phi(x) \wle \phi(\abs{x})
\]
and
\[
 \cosh(x)-1 \wle \phi(\abs{x}) \wle 2( \cosh(x)-1).
\]
\end{lemma}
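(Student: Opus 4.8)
The plan is to reduce everything to two elementary facts about $\phi$: its nonnegativity and a symmetric decomposition identity. First I would record the power series representation $\phi(t) = \sum_{k \ge 2} t^k/k!$, valid for every real $t$ since $\phi$ is entire. For the first inequality $\phi(x) \le \phi(\abs{x})$ this representation is decisive: comparing the two series term by term and using $x^k \le \abs{x}^k$ for every $k \ge 2$ gives the claim at once. I would also note the pointwise bound $\phi \ge 0$ on all of $\R$, which follows from the tangent-line inequality $e^s \ge 1 + s$ (for $s \ge 0$ it is already visible from the nonnegative-coefficient series).

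The second part rests on the algebraic identity $\phi(x) + \phi(-x) = e^x + e^{-x} - 2 = 2(\cosh x - 1)$, obtained by adding $\phi(x) = e^x - 1 - x$ and $\phi(-x) = e^{-x} - 1 + x$ so that the linear terms cancel. Since $\cosh$ is even, I may replace $x$ by $\abs{x}$ throughout and work with $t = \abs{x} \ge 0$.

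For the upper bound I would write $\phi(\abs{x}) \le \phi(\abs{x}) + \phi(-\abs{x}) = 2(\cosh x - 1)$, where the inequality is just $\phi(-\abs{x}) \ge 0$ from the nonnegativity above. For the lower bound I would instead combine the identity with the first part of the lemma: applying $\phi(y) \le \phi(\abs{y})$ with $y = -\abs{x}$ yields $\phi(-\abs{x}) \le \phi(\abs{x})$, whence $2(\cosh x - 1) = \phi(\abs{x}) + \phi(-\abs{x}) \le 2\phi(\abs{x})$, i.e.\ $\cosh x - 1 \le \phi(\abs{x})$. The two-sided bound then follows.

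I do not anticipate a genuine obstacle, as the statement is elementary; the only real choice is organisational. The idea that makes the argument transparent is the decomposition $2(\cosh x - 1) = \phi(x) + \phi(-x)$, which converts both sides of the sandwich into statements about $\phi(-\abs{x})$: its nonnegativity gives the upper bound, and its domination by $\phi(\abs{x})$ (the content of the first inequality) gives the lower bound. The single substantive inequality hiding underneath is $\phi(-t) \le \phi(t)$ for $t \ge 0$, equivalently $\sinh t \ge t$, which is immediate from $\sinh' = \cosh \ge 1$.
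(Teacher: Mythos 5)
Your proof is correct, and it takes a recognizably different route from the paper's, even though both are elementary. The paper verifies each inequality by a separate direct computation: the first via convexity of the exponential (for $x<0$, $\phi(-x)-\phi(x)=e^{-x}-e^{x}+2x\ge 0$), the upper bound by checking $2(\cosh(x)-1)-\phi(x)=e^{-x}+x-1\ge 0$ from $e^{-x}\ge 1-x$, and the lower bound by a fresh derivative argument on $h(x)=e^{x}-e^{-x}-2x$ (showing $h(0)=h'(0)=0$ and $h''>0$ for $x>0$). You instead take the identity $\phi(x)+\phi(-x)=2(\cosh(x)-1)$ as the organizing principle and reduce both sides of the sandwich to statements about $\phi(-\abs{x})$: its nonnegativity gives the upper bound, and its domination by $\phi(\abs{x})$ --- which is just the first part of the lemma applied at $-\abs{x}$ --- gives the lower bound. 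What this buys you is economy and explicit logical structure: your lower bound needs no new analysis because it recycles part one, whereas the paper's separate derivative argument is redundant work (and in fact slightly garbled: for $h(x)=e^{x}-e^{-x}-2x$ one has $h''(x)=e^{x}-e^{-x}$, not $e^{x}+e^{-x}$ as written, though positivity for $x>0$ and the conclusion survive). At bottom the same facts drive both proofs --- the paper's quantity $e^{-x}+x-1$ is precisely $\phi(-x)$, and its $\tfrac12(e^{x}-e^{-x}-2x)$ is precisely $\sinh(x)-x$, your single substantive inequality --- but your decomposition makes these identifications explicit and the argument shorter.
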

\begin{proof}
Denote $ \psi(x) = \cosh(x) - 1 $.
The inequality \( \phi(x) \le \phi(|x|) \) holds trivially as identity for
$x \ge 0$. For $x < 0$, note that
\[
 \phi(|x|) - \phi(x)
 \weq \phi(-x) - \phi(x)
 \weq e^{-x} - e^{x} + 2x
 \wge 0,
\]
by convexity of the exponential.

For the upper bound of $\phi(\abs{x})$, assume \( x \ge 0 \), and note that
\[
 2\psi(x) - \updated{\phi(x)}
 \weq e^{-x} + x - 1
 \wge 0,
\]
since \( e^{-x} \ge 1 - x \).
For the lower bound of $\phi(\abs{x})$, again with \( x \ge 0 \), we see that
\[
 \phi(x) - \psi(x)
 \weq \tfrac{1}{2}\bigl(e^{x} - e^{-x} - 2x \bigr)
 \wge 0,
\]
because the function \( h(x) = e^x - e^{-x} - 2x \) satisfies \( h(0) = h'(0) = 0 \) and
\[
h''(x) = \updated{e^{x} - e^{-x}} > 0 \quad \text{for } x > 0,
\]
implying \( h(x) \ge 0 \). 
We conclude that $\psi(x) \le \phi(|x|) \le 2 \psi(x)$ holds for all $x \ge 0$.
The same bounds hold for $x < 0$ due to $\psi(-x) = \psi(x)$.
\end{proof}

\bibliographystyle{abbrv}
\bibliography{lslReferences}

\end{document}